\title{Asymptotic Expansions for Moment Functionals of Perturbed Discrete Time Semi-Markov Processes}
\author{Mikael Petersson\footnote{Department of Mathematics, Stockholm University, SE-106 91 Stockholm, Sweden, mikpe@math.su.se.}}
\date{}
\newtheorem{theorem}{Theorem}
\newtheorem{lemma}{Lemma}
\begin{document}

\maketitle

\begin{abstract}
In this paper, we study mixed power-exponential moment functionals of nonlinearly perturbed semi-Markov processes in discrete time. Conditions under which the moment functionals of interest can be expanded in asymptotic power series with respect to the perturbation parameter are given. We show how the coefficients in these expansions can be computed from explicit recursive formulas. In particular, the results of the present paper have applications for studies of quasi-stationary distributions.
\end{abstract}

\noindent \textbf{Keywords:} Semi-Markov process, Perturbation, Asymptotic expansion, Renewal equation, Solidarity property, First hitting time. \\
\\
\textbf{MSC2010:} Primary 60K15; Secondary 41A60, 60K05.

\section{Introduction}

The aim of this paper is to present asymptotic power series expansions for some important moment functionals of non-linearly perturbed semi-Markov processes in discrete time and to show how the coefficients in these expansions can be calculated from explicit recursive formulas. These asymptotic expansions play a fundamental role for the main result in Petersson (2016), which is a sequel of the present paper.

For each $\varepsilon \geq 0$, we let $\xi^{(\varepsilon)}(n)$, $n=0,1,\ldots,$ be a discrete time semi-Markov process on the state space $X = \{ 0,1,\ldots,N \}$. It is assumed that the process $\xi^{(\varepsilon)}(n)$ depends on $\varepsilon$ in the sense that its transition probabilities $Q_{i j}^{(\varepsilon)}(n)$ are continuous at $\varepsilon = 0$ when considered as a function of $\varepsilon$. Thus, we can, for $\varepsilon > 0$, interpret the process $\xi^{(\varepsilon)}(n)$ as a perturbation of $\xi^{(0)}(n)$.

Throughout the paper, we consider the case where the states $\{ 1,\ldots,N \}$ is a communicating class of states for $\varepsilon$ small enough. Transitions to state $0$ may, or may not, be possible both for the perturbed process and the limiting process. It will also be natural to consider state $0$ as an absorbing state but the results hold even if this is not the case.

Our main objects of study are the following mixed power-exponential moment functionals,
\begin{equation} \label{eq:momentfunctionals}
 \phi_{i j}^{(\varepsilon)}(\rho,r) = \sum_{n=0}^\infty n^r e^{\rho n} g_{i j}^{(\varepsilon)}(n), \quad \omega_{i j s}^{(\varepsilon)}(\rho,r) = \sum_{n=0}^\infty n^r e^{\rho n} h_{i j s}^{(\varepsilon)}(n),
\end{equation}
where $\rho \in \mathbb{R}$, $r=0,1,\ldots,$ $i,j,s \in X$,
\begin{equation*}
 g_{i j}^{(\varepsilon)}(n) = \mathsf{P}_i \{ \mu_j^{(\varepsilon)} = n, \ \mu_0^{(\varepsilon)} > \mu_j^{(\varepsilon)} \},
\end{equation*}
\begin{equation*}
 h_{i j s}^{(\varepsilon)}(n) = \mathsf{P}_i \{ \xi^{(\varepsilon)}(n) = s, \ \mu_0^{(\varepsilon)} \wedge \mu_j^{(\varepsilon)} > n \},
\end{equation*}
and $\mu_j^{(\varepsilon)}$ is the first hitting time of state $j$.

As is well known, power moments, exponential moments, and, as in \eqref{eq:momentfunctionals}, a mixture of power and exponential moments, often play important roles in various applications. One reason that the moments defined by equation \eqref{eq:momentfunctionals} is of interest is that the probabilities $P_{i j}^{(\varepsilon)}(n) = \mathsf{P}_i \{ \xi^{(\varepsilon)}(n) = j, \ \mu_0^{(\varepsilon)} > 0 \}$ satisfy the following discrete time renewal equation,
\begin{equation*}
 P_{i j}^{(\varepsilon)}(n) = h_{i i j}^{(\varepsilon)}(n) + \sum_{k=0}^n P_{i j}^{(\varepsilon)}(n-k) g_{i i}^{(\varepsilon)}(n), \ n=0,1,\ldots
\end{equation*}
This can, for example, be used in studies of quasi-stationary distributions as is illustrated in Petersson (2016).

Under the assumption that mixed power-exponential moments for transition probabilities can be expanded in asymptotic power series with respect to the perturbation parameter, we obtain corresponding asymptotic expansions for the moment functionals in equation \eqref{eq:momentfunctionals}. These expansions together with explicit formulas for calculating the coefficients in the expansions are the main results of this paper.

In order to achieve this, we use methods from Gyllenberg and Silvestrov (2008) where corresponding moment functionals for continuous time semi-Markov processes are studied. These methods are based on first deriving recursive systems of linear equations connecting the moments of interest with moments of transition probabilities and then successively build expansions for solutions of such systems.

Analysis of perturbed Markov chains and semi-Markov processes constitutes a large branch of research in applied probability, see, for example, the books by Kartashov (1996), Yin and Zhang (1998), Koroliuk and Limnios (2005), Gyllenberg and Silvestrov (2008), and Avrachenkov, Filar, and Howlett (2013). More detailed comments on this and additional references are given in Petersson (2016).

Let us now briefly outline the structure of the present paper. In Section \ref{sec:semimarkov} we define perturbed discrete time semi-Markov processes and formulate our basic conditions. Then, systems of linear equations for exponential moment functionals are derived in Section \ref{sec:systems} and in Section \ref{sec:convergence} we show convergence for the solutions of these systems. Finally, in Section \ref{sec:expansions}, we present the main results which give asymptotic expansions for mixed power-exponential moment functionals.

\section{Perturbed Semi-Markov Processes} \label{sec:semimarkov}

In this section we define perturbed discrete time semi-Markov processes and formulate some basic conditions.

For every $\varepsilon \geq 0$, let $(\eta_n^{(\varepsilon)}, \kappa_n^{(\varepsilon)})$, $n = 0,1,\ldots,$ be a discrete time Markov renewal process, i.e., a homogeneous Markov chain with state space $X \times \mathbb{N}$, where $X = \{ 0,1,\ldots,N \}$ and $\mathbb{N} = \{ 1,2,\ldots \}$, an initial distribution $Q_i^{(\varepsilon)} = \mathsf{P} \{ \eta_0^{(\varepsilon)} = i \}$, $i \in X$, and transition probabilities which do not depend on the current value of the second component, given by
\begin{equation*}
 Q_{i j}^{(\varepsilon)}(k) = \mathsf{P} \{ \eta_{n+1}^{(\varepsilon)} = j, \ \kappa_{n+1}^{(\varepsilon)} = k \, | \, \eta_n^{(\varepsilon)} = i, \ \kappa_n^{(\varepsilon)} = l \}, \ k,l \in \mathbb{N}, \ i,j \in X.
\end{equation*}
In this case, it is known that $\eta_n^{(\varepsilon)}$, $n=0,1,\ldots,$ is also a Markov chain with state space $X$ and transition probabilities,
\begin{equation*}
 p_{i j}^{(\varepsilon)} = \mathsf{P} \{ \eta_{n+1}^{(\varepsilon)} = j \, | \, \eta_n^{(\varepsilon)} = i \} = \sum_{k=1}^\infty Q_{i j}^{(\varepsilon)}(k), \ i,j \in X.
\end{equation*}

Let us define $\tau^{(\varepsilon)}(0) = 0$ and $\tau^{(\varepsilon)}(n) = \kappa_1^{(\varepsilon)} + \cdots + \kappa_n^{(\varepsilon)}$, for $n \in \mathbb{N}$. Furthermore, for $n=0,1,\ldots,$ we define $\nu^{(\varepsilon)}(n) = \max \{ k : \tau^{(\varepsilon)}(k) \leq n \}$. The discrete time semi-Markov process associated with the Markov renewal process $(\eta_n^{(\varepsilon)}, \kappa_n^{(\varepsilon)})$ is defined by the following relation,
\begin{equation*}
 \xi^{(\varepsilon)}(n) = \eta^{(\varepsilon)}_{\nu^{(\varepsilon)}(n)}, \ n=0,1,\ldots,
\end{equation*}
and we will refer to $Q_{i j}^{(\varepsilon)}(k)$ as the transition probabilities of this process.

In the semi-Markov process defined above, we have that (i) $\kappa_n^{(\varepsilon)}$ are the times between successive moments of jumps, (ii) $\tau^{(\varepsilon)}(n)$ are the moments of the jumps, (iii) $\nu^{(\varepsilon)}(n)$ are the number of jumps in the interval $[0,n]$, and (iv) $\eta_n^{(\varepsilon)}$ is the embedded Markov chain.

It is sometimes convenient to write the transition probabilities of the semi-Markov process as $Q_{i j}^{(\varepsilon)}(k) = p_{i j}^{(\varepsilon)} f_{i j}^{(\varepsilon)}(k)$, where
\begin{equation*}
 f_{i j}^{(\varepsilon)}(k) = \mathsf{P} \{ \kappa_{n+1}^{(\varepsilon)} = k \, | \, \eta_n^{(\varepsilon)} = i, \ \eta_{n+1}^{(\varepsilon)} = j \}, \ k \in \mathbb{N}, \ i,j \in X,
\end{equation*}
are the conditional distributions of transition times.

We now define random variables for first hitting times. For each $j \in X$, let $\nu_j^{(\varepsilon)} = \min \{ n \geq 1 : \eta_n^{(\varepsilon)} = j \}$ and $\mu_j^{(\varepsilon)} = \tau(\nu_j^{(\varepsilon)})$. Then, $\nu_j^{(\varepsilon)}$ and $\mu_j^{(\varepsilon)}$ are the first hitting times of state $j$ for the embedded Markov chain and the semi-Markov process, respectively. Note that the random variables $\nu_j^{(\varepsilon)}$ and $\mu_j^{(\varepsilon)}$, which may be improper, take values in the set $\{ 1,2,\ldots,\infty \}$.

Let us define
\begin{equation*}
 g_{i j}^{(\varepsilon)}(n) = \mathsf{P}_i \{ \mu_j^{(\varepsilon)} = n, \ \nu_0^{(\varepsilon)} > \nu_j^{(\varepsilon)} \}, \ n =0,1,\ldots, \ i,j \in X,
\end{equation*}
and
\begin{equation*}
 g_{i j}^{(\varepsilon)} = \mathsf{P}_i \{ \nu_0^{(\varepsilon)} > \nu_j^{(\varepsilon)} \}, \ i,j \in X.
\end{equation*}
Here, and in what follows, we write $\mathsf{P}_i(A^{(\varepsilon)}) = \mathsf{P} \{ A^{(\varepsilon)} \, | \, \eta_0^{(\varepsilon)} = i \}$ for any event $A^{(\varepsilon)}$. Corresponding notation for conditional expectation will also be used.

Moment generating functions for distributions of first hitting times are defined by
\begin{equation} \label{eq:defphi}
 \phi_{i j}^{(\varepsilon)}(\rho) = \sum_{n=0}^\infty e^{\rho n} g_{i j}^{(\varepsilon)}(n) = \mathsf{E}_i e^{\rho \mu_j^{(\varepsilon)}} \chi( \nu_0^{(\varepsilon)} > \nu_j^{(\varepsilon)}), \ \rho \in \mathbb{R}, \ i,j \in X.
\end{equation}

Furthermore, let us define the following exponential moment functionals for transition probabilities,
\begin{equation*}
 p_{i j}^{(\varepsilon)}(\rho) = \sum_{n=0}^\infty e^{\rho n} Q_{i j}^{(\varepsilon)}(n), \ \rho \in \mathbb{R}, \ i,j \in X,
\end{equation*}
where we define $Q_{i j}^{(\varepsilon)}(0) = 0$.

Let us now introduce the following conditions, which we will refer to frequently throughout the paper:

\begin{enumerate}
\item[$\mathbf{A}$:]
\begin{enumerate}
\item[$\mathbf{(a)}$] $p_{i j}^{(\varepsilon)} \rightarrow p_{i j}^{(0)}$, as $\varepsilon \rightarrow 0$, $i \neq 0$, $j \in X$.
\item[$\mathbf{(b)}$] $f_{i j}^{(\varepsilon)}(n) \rightarrow f_{i j}^{(0)}(n)$, as $\varepsilon \rightarrow 0$, $n \in \mathbb{N}$, $i \neq 0$, $j \in X$.
\end{enumerate}
\end{enumerate}

\begin{enumerate}
\item[$\mathbf{B}$:] $g_{i j}^{(0)} > 0$, $i,j \neq 0$.
\end{enumerate}

\begin{enumerate}
\item[$\mathbf{C}$:] There exists $\beta > 0$ such that:
\begin{enumerate}
\item[$\mathbf{(a)}$] $\limsup_{0 \leq \varepsilon \rightarrow 0} p_{i j}^{(\varepsilon)}(\beta) < \infty$, for all $i \neq 0$, $j \in X$.
\item[$\mathbf{(b)}$] $\phi_{i i}^{(0)}(\beta_i) \in (1,\infty)$, for some $i \neq 0$ and $\beta_i \leq \beta$.
\end{enumerate}
\end{enumerate}

It follows from conditions $\mathbf{A}$ and $\mathbf{B}$ that $\{ 1,\ldots,N \}$ is a communicating class of states for sufficiently small $\varepsilon$. Let us also remark that if $p_{i 0}^{(0)} = 0$ for all $i \neq 0$, it can be shown that part $\mathbf{(b)}$ of condition $\mathbf{C}$ always holds under conditions $\mathbf{A}$, $\mathbf{B}$, and $\mathbf{C} \mathbf{(a)}$.

\section{Systems of Linear Equations} \label{sec:systems}

In this section we derive systems of linear equations for exponential moment functionals.

We first consider the moment generating functions $\phi_{i j}^{(\varepsilon)}(\rho)$, defined by equation \eqref{eq:defphi}. By conditioning on $(\eta_1^{(\varepsilon)}, \kappa_1^{(\varepsilon)})$, we get for each $i,j \neq 0$,
\begin{equation} \label{eq:systemphipre}
\begin{split}
 \phi_{i j}^{(\varepsilon)}(\rho) &= \sum_{l \in X} \sum_{k=1}^\infty \mathsf{E}_i ( e^{\rho \mu_j^{(\varepsilon)}} \chi( \nu_0^{(\varepsilon)} > \nu_j^{(\varepsilon)}) | \eta_1^{(\varepsilon)} = l, \ \kappa_1^{(\varepsilon)} = k ) Q_{i l}^{(\varepsilon)}(k) \\
 &= \sum_{k=1}^\infty e^{\rho k} Q_{i j}^{(\varepsilon)}(k) + \sum_{l \neq 0,j} \sum_{k=1}^\infty \mathsf{E}_l e^{\rho (k + \mu_j^{(\varepsilon)})} \chi( \nu_0^{(\varepsilon)} > \nu_j^{(\varepsilon)}) Q_{i l}^{(\varepsilon)}(k).
\end{split}
\end{equation}

Relation \eqref{eq:systemphipre} gives us the following system of linear equations,
\begin{equation} \label{eq:systemphi}
 \phi_{i j}^{(\varepsilon)}(\rho) = p_{i j}^{(\varepsilon)}(\rho) + \sum_{l \neq 0,j} p_{i l}^{(\varepsilon)}(\rho) \phi_{l j}^{(\varepsilon)}(\rho), \ i,j \neq 0.
\end{equation}

In what follows it will often be convenient to use matrix notation. Let us introduce the following column vectors,
\begin{equation} \label{eq:vectorphi}
 \mathsf{\Phi}_j^{(\varepsilon)}(\rho) =
 \begin{bmatrix}
  \phi_{1 j}^{(\varepsilon)}(\rho) & \cdots & \phi_{N j}^{(\varepsilon)}(\rho)
 \end{bmatrix}
 ^T, \ j \neq 0,
\end{equation}
\begin{equation} \label{eq:vectorp}
 \mathbf{p}_j^{(\varepsilon)}(\rho) =
 \begin{bmatrix}
  p_{1 j}^{(\varepsilon)}(\rho) & \cdots & p_{N j}^{(\varepsilon)}(\rho)
 \end{bmatrix}
 ^T, \ j \in X.
\end{equation}

For each $j \neq 0$, we also define $N \times N$-matrices ${_{j}\mathbf{P}}^{(\varepsilon)}(\rho) = \| {_{j}p}_{i k}^{(\varepsilon)}(\rho) \|$ where the elements are given by
\begin{equation} \label{eq:matrixjP}
 {_{j}p}_{i k}^{(\varepsilon)}(\rho) = \left\{
 \begin{array}{l l}
  p_{i k}^{(\varepsilon)}(\rho) & i=1,\ldots,N, \ k \neq j, \\
  0 & i=1,\ldots,N, \ k = j.
 \end{array}
 \right.
\end{equation}

Using \eqref{eq:vectorphi}, \eqref{eq:vectorp}, and \eqref{eq:matrixjP}, we can write the system \eqref{eq:systemphi} in the following matrix form,
\begin{equation} \label{eq:systemphimatrix}
 \mathsf{\Phi}_j^{(\varepsilon)}(\rho) = \mathbf{p}_j^{(\varepsilon)}(\rho) + {_{j}\mathbf{P}}^{(\varepsilon)}(\rho) \mathsf{\Phi}_j^{(\varepsilon)}(\rho), \ j \neq 0.
\end{equation}

Note that the relations given above hold for all $\rho \in \mathbb{R}$ even in the case where some of the quantities involved take the value infinity. In this case we use the convention $0 \cdot \infty = 0$ and the equalities may take the form $\infty = \infty$.

Let us now derive a similar type of system for the following exponential moment functionals,
\begin{equation*}
 \omega_{i j s}^{(\varepsilon)}(\rho) = \sum_{n=0}^\infty e^{\rho n} \mathsf{P}_i \{ \xi^{(\varepsilon)}(n) = s, \ \mu_0^{(\varepsilon)} \wedge \mu_j^{(\varepsilon)} > n \}, \ \rho \in \mathbb{R}, \ i,j,s \in X.
\end{equation*}

First, note that
\begin{equation*}
\begin{split}
 \omega_{i j s}^{(\varepsilon)}(\rho) &= \mathsf{E}_i \sum_{n=0}^\infty e^{\rho n} \chi( \xi^{(\varepsilon)}(n) = s, \ \mu_0^{(\varepsilon)} \wedge \mu_j^{(\varepsilon)} > n ) \\
 &= \mathsf{E}_i \sum_{n=0}^{\mu_0^{(\varepsilon)} \wedge \mu_j^{(\varepsilon)} - 1} e^{\rho n} \chi( \xi^{(\varepsilon)}(n) = s ).
\end{split}
\end{equation*}

We now decompose $\omega_{i j s}^{(\varepsilon)}(\rho)$ into two parts,
\begin{equation} \label{eq:omegadecomp}
 \omega_{i j s}^{(\varepsilon)}(\rho) = \mathsf{E}_i \sum_{n=0}^{\kappa_1^{(\varepsilon)} - 1} e^{\rho n} \chi( \xi^{(\varepsilon)}(n) = s ) + \mathsf{E}_i \sum_{n = \kappa_1^{(\varepsilon)}}^{\mu_0^{(\varepsilon)} \wedge \mu_j^{(\varepsilon)} - 1} e^{\rho n} \chi( \xi^{(\varepsilon)}(n) = s ).
\end{equation}

Let us first rewrite the first term on the right hand side of equation \eqref{eq:omegadecomp}. By conditioning on $\kappa_1^{(\varepsilon)}$ we get, for $i,s \neq 0$,
\begin{equation*}
\begin{split}
 &\mathsf{E}_i \sum_{n=0}^{\kappa_1^{(\varepsilon)} - 1} e^{\rho n} \chi( \xi^{(\varepsilon)}(n) = s ) \\
 &\quad \quad = \sum_{k=1}^\infty \mathsf{E}_i \left( \sum_{n=0}^{\kappa_1^{(\varepsilon)} - 1} e^{\rho n} \chi( \xi^{(\varepsilon)}(n) = s ) \, \Big| \, \kappa_1^{(\varepsilon)} = k \right) \mathsf{P}_i \{ \kappa_1^{(\varepsilon)} = k \} \\
 &\quad \quad = \sum_{k=1}^\infty \delta(i,s) \left( \sum_{n=0}^{k - 1} e^{\rho n} \right) \mathsf{P}_i \{ \kappa_1^{(\varepsilon)} = k \}.
\end{split}
\end{equation*}
It follows that
\begin{equation} \label{eq:firstterm}
 \mathsf{E}_i \sum_{n=0}^{\kappa_1^{(\varepsilon)} - 1} e^{\rho n} \chi( \xi^{(\varepsilon)}(n) = s ) = \delta(i,s) \varphi_i^{(\varepsilon)}(\rho), \ i,s \neq 0.
\end{equation}
where
\begin{equation} \label{eq:defvarphi}
 \varphi_i^{(\varepsilon)}(\rho) = \left\{
 \begin{array}{l l}
  \mathsf{E}_i \kappa_1^{(\varepsilon)} & \rho = 0, \\
  ( \mathsf{E}_i e^{\rho \kappa_1^{(\varepsilon)}} - 1 ) / (e^\rho - 1) & \rho \neq 0.
 \end{array}
 \right.
\end{equation}

Let us now consider the second term on the right hand side of equation \eqref{eq:omegadecomp}. By conditioning on $(\eta_1^{(\varepsilon)},\kappa_1^{(\varepsilon)})$ we get, for $i,j,s \neq 0$,
\begin{equation*}
\begin{split}
 &\mathsf{E}_i \sum_{n = \kappa_1^{(\varepsilon)}}^{\mu_0^{(\varepsilon)} \wedge \mu_j^{(\varepsilon)} - 1} e^{\rho n} \chi( \xi^{(\varepsilon)}(n) = s ) \\
 &\quad \quad = \sum_{l \neq 0,j} \sum_{k=1}^\infty \mathsf{E}_i \left( \sum_{n = \kappa_1^{(\varepsilon)}}^{\mu_0^{(\varepsilon)} \wedge \mu_j^{(\varepsilon)} - 1} e^{\rho n} \chi( \xi^{(\varepsilon)}(n) = s ) \, \Big| \, \eta_1^{(\varepsilon)} = l, \ \kappa_1^{(\varepsilon)} = k \right) Q_{i l}^{(\varepsilon)}(k) \\
 &\quad \quad = \sum_{l \neq 0,j} \sum_{k=1}^\infty \mathsf{E}_l \left( \sum_{n = 0}^{\mu_0^{(\varepsilon)} \wedge \mu_j^{(\varepsilon)} - 1} e^{\rho (k + n)} \chi( \xi^{(\varepsilon)}(n) = s ) \right) Q_{i l}^{(\varepsilon)}(k).
\end{split}
\end{equation*}
It follows that
\begin{equation} \label{eq:secondterm}
 \mathsf{E}_i \sum_{n = \kappa_1^{(\varepsilon)}}^{\mu_0^{(\varepsilon)} \wedge \mu_j^{(\varepsilon)} - 1} e^{\rho n} \chi( \xi^{(\varepsilon)}(n) = s ) = \sum_{l \neq 0,j} p_{i l}^{(\varepsilon)}(\rho) \omega_{l j s}^{(\varepsilon)}(\rho), \ i,j,s \neq 0.
\end{equation}

From \eqref{eq:omegadecomp}, \eqref{eq:firstterm}, and \eqref{eq:secondterm} we now get the following system of linear equations,
\begin{equation} \label{eq:systemomega}
 \omega_{i j s}^{(\varepsilon)}(\rho) = \delta(i,s) \varphi_i^{(\varepsilon)}(\rho) + \sum_{l \neq 0,j} p_{i l}^{(\varepsilon)}(\rho) \omega_{l j s}^{(\varepsilon)}(\rho), \ i,j,s \neq 0.
\end{equation}

In order to write this system in matrix form, let us define the following column vectors,
\begin{equation} \label{eq:vectorvarphi}
 \widehat{\boldsymbol{\varphi}}_s^{(\varepsilon)}(\rho) =
 \begin{bmatrix}
  \delta(1,s) \varphi_1^{(\varepsilon)}(\rho) & \cdots & \delta(N,s) \varphi_N^{(\varepsilon)}(\rho)
 \end{bmatrix}
 ^T, \ s \neq 0,
\end{equation}
\begin{equation} \label{eq:vectoromega}
 \boldsymbol{\omega}_{j s}^{(\varepsilon)}(\rho) =
 \begin{bmatrix}
  \omega_{1 j s}^{(\varepsilon)}(\rho) & \cdots & \omega_{N j s}^{(\varepsilon)}(\rho)
 \end{bmatrix}
 ^T, \ j,s \neq 0.
\end{equation}

Using \eqref{eq:matrixjP}, \eqref{eq:vectorvarphi}, and \eqref{eq:vectoromega}, the system \eqref{eq:systemomega} can be written in the following matrix form,
\begin{equation} \label{eq:systemomegamatrix}
 \boldsymbol{\omega}_{j s}^{(\varepsilon)}(\rho) = \widehat{\boldsymbol{\varphi}}_s^{(\varepsilon)}(\rho) + {_{j}\mathbf{P}}^{(\varepsilon)}(\rho) \boldsymbol{\omega}_{j s}^{(\varepsilon)}(\rho), \ j,s \neq 0.
\end{equation}

We close this section with a lemma which will be important in what follows.

\begin{lemma} \label{lmm:finite}
Assume that we for some $\varepsilon \geq 0$ and $\rho \in \mathbb{R}$ have that $g_{i k}^{(\varepsilon)} > 0$, $i,k \neq 0$ and $p_{i k}^{(\varepsilon)}(\rho) < \infty$, $i \neq 0$, $k \in X$. Then, for any $j \neq 0$, the following statements are equivalent:
\begin{enumerate}
\item[$\mathbf{(a)}$] $\mathsf{\Phi}_j^{(\varepsilon)}(\rho) < \infty$.
\item[$\mathbf{(b)}$] $\boldsymbol{\omega}_{j s}^{(\varepsilon)}(\rho) < \infty$, $s \neq 0$.
\item[$\mathbf{(c)}$] The inverse matrix $( \mathbf{I} - {_{j}\mathbf{P}}^{(\varepsilon)}(\rho) )^{-1}$ exists.
\end{enumerate}
\end{lemma}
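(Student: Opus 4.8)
The plan is to reduce both linear systems to a single Neumann series and then read off all three conditions from the entrywise finiteness of its sum. Write $\mathbf{R} = {_{j}\mathbf{P}}^{(\varepsilon)}(\rho)$; by hypothesis this is an entrywise finite, nonnegative $N\times N$ matrix, and by construction its $j$-th column vanishes, so expanding $\det(\mathbf{I}-\mathbf{R})$ along that column gives $\det(\mathbf{I}-\mathbf{R}) = \det(\mathbf{I}'-\mathbf{R}')$, where $\mathbf{R}'$ is the submatrix of $\mathbf{R}$ on the states $\{1,\dots,N\}\setminus\{j\}$. Iterating \eqref{eq:systemphimatrix} and \eqref{eq:systemomegamatrix} and using that every term is nonnegative, I would first establish the representations $\mathsf{\Phi}_j^{(\varepsilon)}(\rho) = \mathbf{U}\,\mathbf{p}_j^{(\varepsilon)}(\rho)$ and $\boldsymbol{\omega}_{js}^{(\varepsilon)}(\rho) = \mathbf{U}\,\widehat{\boldsymbol{\varphi}}_s^{(\varepsilon)}(\rho)$, valid in $[0,\infty]$, where $\mathbf{U} = \sum_{n=0}^\infty \mathbf{R}^n$. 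These identities exhibit the probabilistic quantities as the minimal nonnegative solutions of the two systems, and they can also be checked directly by a first/last-passage decomposition of the paths defining $\phi$ and $\omega$.

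Next I would dispose of \textbf{(b)}. From \eqref{eq:defvarphi} and \eqref{eq:vectorvarphi} one has $\widehat{\boldsymbol{\varphi}}_s^{(\varepsilon)}(\rho) = \varphi_s^{(\varepsilon)}(\rho)\,\mathbf{e}_s$, and since $\mathsf{E}_s e^{\rho\kappa_1^{(\varepsilon)}} = \sum_{l\in X} p_{sl}^{(\varepsilon)}(\rho) < \infty$ by hypothesis while $\kappa_1^{(\varepsilon)}\geq 1$ forces $\varphi_s^{(\varepsilon)}(\rho)\in(0,\infty)$, the vector $\boldsymbol{\omega}_{js}^{(\varepsilon)}(\rho)$ is a strictly positive multiple of the $s$-th column of $\mathbf{U}$. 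Letting $s$ run over $\{1,\dots,N\}$ (the $j$-th column of $\mathbf{U}$ is trivially $\mathbf{e}_j$), condition \textbf{(b)} is therefore equivalent to the entrywise finiteness of $\mathbf{U}$.

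For \textbf{(a)} I would prove the same equivalence, \textbf{(a)} $\Leftrightarrow \mathbf{U}<\infty$. One direction is immediate because $\mathbf{p}_j^{(\varepsilon)}(\rho)$ is finite. For the converse — and this is exactly where the communication hypothesis $g_{ik}^{(\varepsilon)}>0$ enters — I would fix, for each $k\neq j$, a single finite-weight path in the embedded chain from $k$ to $j$ that stays in $\{1,\dots,N\}$ and reaches $j$ only at its last step; such a path exists and carries positive $\rho$-weight $c_k$ precisely because $g_{kj}^{(\varepsilon)}>0$. Concatenating an arbitrary $\mathbf{R}$-path from $i$ to $k$ with this fixed tail is an injective map into the set of paths from $i$ that hit $j$ before $0$, whence $c_k\,\mathbf{U}_{ik}\leq \phi_{ij}^{(\varepsilon)}(\rho)$; if \textbf{(a)} holds this bounds every entry of $\mathbf{U}$.

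It remains to identify $\mathbf{U}<\infty$ with \textbf{(c)}. The implication $\mathbf{U}<\infty \Rightarrow$ \textbf{(c)} is standard: entrywise convergence of $\sum_n\mathbf{R}^n$ forces $\mathbf{R}^n\to 0$, hence the spectral radius of $\mathbf{R}$ is below one and $(\mathbf{I}-\mathbf{R})^{-1}=\mathbf{U}$. I expect the reverse implication to be the main obstacle, since for a nonnegative matrix mere invertibility of $\mathbf{I}-\mathbf{R}$ (that is, $\det(\mathbf{I}'-\mathbf{R}')\neq 0$) does not by itself bound the spectral radius. The plan is to resolve it via Perron--Frobenius: the spectral radius of the nonnegative matrix $\mathbf{R}'$ is itself an eigenvalue, so if it equals one then $\mathbf{I}-\mathbf{R}$ is singular, while the minimal-solution representations above are finite if and only if that spectral radius is strictly below one. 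Reading \textbf{(c)} through the nonnegative resolvent $(\mathbf{I}-\mathbf{R})^{-1}=\sum_n\mathbf{R}^n\geq 0$ closes the cycle; carefully matching this nonnegativity with the bare statement that the inverse exists is the delicate point I would treat most carefully, handling the degenerate case $\rho=0$ (where $\varphi_s^{(\varepsilon)}(0)=\mathsf{E}_s\kappa_1^{(\varepsilon)}$ may be infinite) separately.
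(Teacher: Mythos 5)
Your overall strategy is the same as the paper's: everything is reduced to entrywise finiteness of the Neumann series $\mathbf{U}=\sum_{n\geq 0}\mathbf{R}^n$, which the paper calls ${_{j}\mathbf{A}}^{(\varepsilon)}(\rho)$ in \eqref{eq:matrixjA}, and your representation $\mathsf{\Phi}_j^{(\varepsilon)}(\rho)=\mathbf{U}\,\mathbf{p}_j^{(\varepsilon)}(\rho)$ is exactly \eqref{eq:matrixphialt}. Within that skeleton, your path-concatenation bound $c_k\mathbf{U}_{ik}\leq\phi_{ij}^{(\varepsilon)}(\rho)$ is a correct and arguably cleaner substitute for the paper's argument for $\mathbf{(a)}\Rightarrow\mathbf{U}<\infty$ (the paper instead iterates \eqref{eq:systemphimatrix} and uses strict positivity of $\mathsf{\Phi}_j^{(\varepsilon)}(\rho)$ to force $({_{j}\mathbf{P}}^{(\varepsilon)}(\rho))^n\rightarrow\mathbf{0}$), and your column-by-column reading of $\mathbf{(b)}$ parallels the paper's summation over $s$. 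Your caveat about $\rho=0$ is also a real one, not a formality: under the stated hypotheses $\varphi_s^{(\varepsilon)}(0)=\mathsf{E}_s\kappa_1^{(\varepsilon)}$ may be infinite, in which case $\mathbf{(b)}$ can fail while $\mathbf{(a)}$ and $\mathbf{(c)}$ hold; the paper's own passage from \eqref{eq:expdecomp2} to \eqref{eq:omegasum2} gives no control at $\rho=0$, so it quietly has the same defect.

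The genuine gap in your proposal is the one you flagged yourself: you never prove $\mathbf{(c)}\Rightarrow\mathbf{U}<\infty$, and your Perron--Frobenius plan cannot close it, since it only excludes spectral radius equal to one, while invertibility of $\mathbf{I}-\mathbf{R}$ is perfectly compatible with spectral radius greater than one. In fact that implication is false under the lemma's hypotheses. Take $N=2$, $j=1$, $p_{12}^{(\varepsilon)}=1$, $p_{21}^{(\varepsilon)}=p_{22}^{(\varepsilon)}=1/2$, all transition times equal to $1$, and $\rho$ with $e^{\rho}>2$: all $g_{ik}^{(\varepsilon)}>0$ and all $p_{ik}^{(\varepsilon)}(\rho)<\infty$, and $\det(\mathbf{I}-{_{1}\mathbf{P}}^{(\varepsilon)}(\rho))=1-e^{\rho}/2\neq 0$, so $\mathbf{(c)}$ holds, yet $\phi_{21}^{(\varepsilon)}(\rho)\geq\sum_{n\geq 0}(p_{22}^{(\varepsilon)}(\rho))^{n}p_{21}^{(\varepsilon)}(\rho)=\infty$, so $\mathbf{(a)}$ fails. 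So the obstacle you identified is a defect of the statement, not of your approach; note that the paper's own proof of $\mathbf{(c)}\Rightarrow\mathbf{(a)}$ commits precisely the fallacy you anticipated: letting $n\rightarrow\infty$ in \eqref{eq:inverserelit} tacitly assumes the remainder $({_{j}\mathbf{P}}^{(\varepsilon)}(\rho))^{n+1}(\mathbf{I}-{_{j}\mathbf{P}}^{(\varepsilon)}(\rho))^{-1}$ vanishes, which fails when the inverse has negative entries (in the example it diverges, compensating the divergent partial sums). The equivalence becomes true --- and both your argument and the paper's then go through --- if $\mathbf{(c)}$ is read as convergence of the series \eqref{eq:matrixjA}, equivalently as the spectral radius of ${_{j}\mathbf{P}}^{(\varepsilon)}(\rho)$ being strictly less than one; that is also the version needed in the proof of Lemma \ref{lmm:delta}, where the continuity argument should be run on the spectral radius rather than on the determinant.
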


\begin{proof}
For each $j \neq 0$, let us define a matrix valued function ${_{j}\mathbf{A}}^{(\varepsilon)}(\rho) = \| {_{j}a}_{i k}^{(\varepsilon)}(\rho) \|$ by the relation
\begin{equation} \label{eq:matrixjA}
 {_{j}\mathbf{A}}^{(\varepsilon)}(\rho) = \mathbf{I} + {_{j}\mathbf{P}}^{(\varepsilon)}(\rho) + ({_{j}\mathbf{P}}^{(\varepsilon)}(\rho))^2 + \cdots, \ \rho \in \mathbb{R}.
\end{equation}

Since each term on the right hand side of \eqref{eq:matrixjA} is non-negative, it follows that the elements ${_{j}a}_{i k}^{(\varepsilon)}(\rho)$ are well defined and take values in the set $[0,\infty]$. Furthermore, the elements can be written in the following form which gives a probabilistic interpretation,
\begin{equation} \label{eq:probinterpret}
 {_{j}a}_{i k}^{(\varepsilon)}(\rho) = \mathsf{E}_i \sum_{n=0}^\infty e^{\rho \tau^{(\varepsilon)}(n)} \chi( \nu_0^{(\varepsilon)} \wedge \nu_j^{(\varepsilon)} > n, \ \eta_n^{(\varepsilon)} = k ), \ i,k \neq 0.
\end{equation}

Let us now show that
\begin{equation} \label{eq:matrixphialt}
 \mathsf{\Phi}_j^{(\varepsilon)}(\rho) = {_{j}\mathbf{A}}^{(\varepsilon)}(\rho) \mathbf{p}_j^{(\varepsilon)}(\rho), \ \rho \in \mathbb{R}, \ j \neq 0.
\end{equation}
In order to do this, first note that, for $j \neq 0$,
\begin{equation} \label{eq:indicator}
 \chi( \nu_0^{(\varepsilon)} > \nu_j^{(\varepsilon)} ) = \sum_{n=0}^\infty \sum_{k \neq 0} \chi( \nu_0^{(\varepsilon)} \wedge \nu_j^{(\varepsilon)} > n, \ \eta_n^{(\varepsilon)} = k, \ \eta_{n+1}^{(\varepsilon)} = j ).
\end{equation}

Using \eqref{eq:indicator} and the regenerative property of the semi-Markov process, the following is obtained, for $i,j \neq 0$,
\begin{equation} \label{eq:phialtpre}
\begin{split}
 \phi_{i j}^{(\varepsilon)}(\rho) &= \sum_{n=0}^\infty \sum_{k \neq 0} \mathsf{E}_i e^{\rho \mu_j^{(\varepsilon)}} \chi( \nu_0^{(\varepsilon)} \wedge \nu_j^{(\varepsilon)} > n, \ \eta_n^{(\varepsilon)} = k, \ \eta_{n+1}^{(\varepsilon)} = j ) \\
 &= \sum_{n=0}^\infty \sum_{k \neq 0} \mathsf{E}_i e^{\rho \tau^{(\varepsilon)}(n)} \chi( \nu_0^{(\varepsilon)} \wedge \nu_j^{(\varepsilon)} > n, \ \eta_n^{(\varepsilon)} = k ) p_{k j}^{(\varepsilon)}(\rho).
\end{split}
\end{equation}

From \eqref{eq:probinterpret} and \eqref{eq:phialtpre} we get
\begin{equation} \label{eq:phialt}
 \phi_{i j}^{(\varepsilon)}(\rho) = \sum_{k \neq 0} {_{j}a}_{i k}^{(\varepsilon)}(\rho) p_{k j}^{(\varepsilon)}(\rho), \ i,j \neq 0,
\end{equation}
and this proves \eqref{eq:matrixphialt}.

Let us now define
\begin{equation} \label{eq:omegasum}
 \omega_{i j}^{(\varepsilon)}(\rho) = \sum_{s \neq 0} \omega_{i j s}^{(\varepsilon)}(\rho) = \sum_{n=0}^\infty e^{\rho n} \mathsf{P}_i \{ \mu_0^{(\varepsilon)} \wedge \mu_j^{(\varepsilon)} > n \}, \ \rho \in \mathbb{R}, \ \ i,j \neq 0.
\end{equation}

Then, we have
\begin{equation} \label{eq:omegasum2}
 \omega_{i j}^{(\varepsilon)}(\rho) = \left\{
 \begin{array}{l l}
  \mathsf{E}_i (\mu_0^{(\varepsilon)} \wedge \mu_j^{(\varepsilon)}) & \rho = 0, \\
  ( \mathsf{E}_i e^{\rho (\mu_0^{(\varepsilon)} \wedge \mu_j^{(\varepsilon)})} - 1) / (e^\rho - 1) & \rho \neq 0.
 \end{array}
 \right.
\end{equation}

Also notice that
\begin{equation} \label{eq:expdecomp}
 \mathsf{E}_i e^{\rho (\mu_0^{(\varepsilon)} \wedge \mu_j^{(\varepsilon)})} = \mathsf{E}_i e^{\rho \mu_j^{(\varepsilon)}} \chi( \nu_0^{(\varepsilon)} > \nu_j^{(\varepsilon)} ) + \mathsf{E}_i e^{\rho \mu_0^{(\varepsilon)}} \chi( \nu_0^{(\varepsilon)} < \nu_j^{(\varepsilon)} ), \ i,j \neq 0.
\end{equation}

Using similar calculations as above, it can be shown that
\begin{equation} \label{eq:phitildealt}
 \mathsf{E}_i e^{\rho \mu_0^{(\varepsilon)}} \chi( \nu_0^{(\varepsilon)} < \nu_j^{(\varepsilon)} ) = \sum_{k \neq 0} {_{j}a}_{i k}^{(\varepsilon)}(\rho) p_{k 0}^{(\varepsilon)}(\rho), \ i,j \neq 0.
\end{equation}

It follows from \eqref{eq:phialt}, \eqref{eq:expdecomp}, and \eqref{eq:phitildealt} that
\begin{equation} \label{eq:expdecomp2}
 \mathsf{E}_i e^{\rho (\mu_0^{(\varepsilon)} \wedge \mu_j^{(\varepsilon)})} = \sum_{k \neq 0} {_{j}a}_{i k}^{(\varepsilon)}(\rho) \left( p_{k j}^{(\varepsilon)}(\rho) + p_{k 0}^{(\varepsilon)}(\rho) \right), \ i,j \neq 0.
\end{equation}

Let us now show that $\mathbf{(a)}$ implies $\mathbf{(b)}$.

By iterating relation \eqref{eq:systemphimatrix} we obtain,
\begin{equation} \label{eq:phiiterated}
\begin{split}
 \mathsf{\Phi}_j^{(\varepsilon)}(\rho) &= ( \mathbf{I} + {_{j}\mathbf{P}}^{(\varepsilon)}(\rho) + \cdots + ({_{j}\mathbf{P}}^{(\varepsilon)}(\rho))^n ) \mathbf{p}_j^{(\varepsilon)}(\rho) \\
 &+ ({_{j}\mathbf{P}}^{(\varepsilon)}(\rho))^{n+1} \mathsf{\Phi}_j^{(\varepsilon)}(\rho), \ n=1,2,\ldots
\end{split}
\end{equation}

Since $\mathsf{\Phi}_j^{(\varepsilon)}(\rho) < \infty$, it follows from \eqref{eq:phiiterated} that
\begin{equation} \label{eq:phiiterated2}
 ({_{j}\mathbf{P}}^{(\varepsilon)}(\rho))^{n+1} \mathsf{\Phi}_j^{(\varepsilon)}(\rho) \rightarrow \mathbf{0}, \ \text{as} \ n \rightarrow \infty.
\end{equation}

The assumptions of the lemma guarantee that $\mathsf{\Phi}_j^{(\varepsilon)}(\rho) > 0$. From this and relation \eqref{eq:phiiterated2} we can conclude that $({_{j}\mathbf{P}}^{(\varepsilon)}(\rho))^{n+1} \rightarrow \mathbf{0}$, as $n \rightarrow \infty$. It is known that this holds if and only if the matrix series \eqref{eq:matrixjA} converges in norms, that is, ${_{j}\mathbf{A}}^{(\varepsilon)}(\rho)$ is finite. From this and relations \eqref{eq:omegasum}, \eqref{eq:omegasum2}, and \eqref{eq:expdecomp2} it follows that $\mathbf{(b)}$ holds.

Next we show that $\mathbf{(b)}$ implies $\mathbf{(c)}$.

By summing over all $s \neq 0$ in relation \eqref{eq:systemomegamatrix} it follows that
\begin{equation} \label{eq:systemomegasum}
 \boldsymbol{\omega}_j^{(\varepsilon)}(\rho) = \boldsymbol{\varphi}^{(\varepsilon)}(\rho) + {_{j}\mathbf{P}}^{(\varepsilon)}(\rho) \boldsymbol{\omega}_j^{(\varepsilon)}(\rho), \ \rho \in \mathbb{R},
\end{equation}
where
\begin{equation*}
 \boldsymbol{\omega}_j^{(\varepsilon)}(\rho) =
 \begin{bmatrix}
  \omega_{1 j}^{(\varepsilon)}(\rho) & \cdots & \omega_{N j}^{(\varepsilon)}(\rho)
 \end{bmatrix}
 ^T, \ j \neq 0,
\end{equation*}
and
\begin{equation*}
 \boldsymbol{\varphi}^{(\varepsilon)}(\rho) =
 \begin{bmatrix}
  \varphi_1^{(\varepsilon)}(\rho) & \cdots & \varphi_N^{(\varepsilon)}(\rho)
 \end{bmatrix}
 ^T.
\end{equation*}

By iterating relation \eqref{eq:systemomegasum} we get
\begin{equation} \label{eq:omegaiterated}
\begin{split}
 \boldsymbol{\omega}_j^{(\varepsilon)}(\rho) &= ( \mathbf{I} + {_{j}\mathbf{P}}^{(\varepsilon)}(\rho) + \cdots + ({_{j}\mathbf{P}}^{(\varepsilon)}(\rho))^n ) \boldsymbol{\varphi}^{(\varepsilon)}(\rho) \\
 &+ ({_{j}\mathbf{P}}^{(\varepsilon)}(\rho))^{n+1} \boldsymbol{\omega}_j^{(\varepsilon)}(\rho), \ n=1,2,\ldots
\end{split}
\end{equation}

It follows from $\mathbf{(b)}$ and the definition of $\omega_{i j}^{(\varepsilon)}(\rho)$ that $0 < \boldsymbol{\omega}_j^{(\varepsilon)}(\rho) < \infty$. So, letting $n \rightarrow \infty$ in \eqref{eq:omegaiterated} and using similar arguments as above, it follows that the matrix series \eqref{eq:matrixjA} converges in norms. It is then known that the inverse matrix $(\mathbf{I} - {_{j}\mathbf{P}}^{(\varepsilon)}(\rho) )^{-1}$ exists, that is, $\mathbf{(c)}$ holds.

Let us finally argue that $\mathbf{(c)}$ implies $\mathbf{(a)}$.

If $(\mathbf{I} - {_{j}\mathbf{P}}^{(\varepsilon)}(\rho) )^{-1}$ exists, then the following relation holds,
\begin{equation} \label{eq:inverserel}
 ( \mathbf{I} - {_{j}\mathbf{P}}^{(\varepsilon)}(\rho) )^{-1} = \mathbf{I} + {_{j}\mathbf{P}}^{(\varepsilon)}(\rho) ( \mathbf{I} - {_{j}\mathbf{P}}^{(\varepsilon)}(\rho) )^{-1}.
\end{equation}

Iteration of \eqref{eq:inverserel} gives
\begin{equation} \label{eq:inverserelit}
\begin{split}
 ( \mathbf{I} - {_{j}\mathbf{P}}^{(\varepsilon)}(\rho) )^{-1} &= \mathbf{I} + {_{j}\mathbf{P}}^{(\varepsilon)}(\rho) + ({_{j}\mathbf{P}}^{(\varepsilon)}(\rho))^2 + \cdots + ({_{j}\mathbf{P}}^{(\varepsilon)}(\rho))^n \\
 &+ ({_{j}\mathbf{P}}^{(\varepsilon)}(\rho))^{n+1} ( \mathbf{I} - {_{j}\mathbf{P}}^{(\varepsilon)}(\rho) )^{-1}, \ n=1,2,\ldots
\end{split}
\end{equation}
Letting $n \rightarrow \infty$ in \eqref{eq:inverserelit} it follows that ${_{j}\mathbf{A}}^{(\varepsilon)}(\rho) = ( \mathbf{I} - {_{j}\mathbf{P}}^{(\varepsilon)}(\rho) )^{-1} < \infty$. From \eqref{eq:matrixphialt} we now see that $\mathbf{(a)}$ holds.
\end{proof}

\section{Convergence of Moment Functionals} \label{sec:convergence}

In this section it is shown that the solutions of the systems derived in Section \ref{sec:systems} converge as the perturbation parameter tends to zero. In addition, we prove some properties for the solution of a characteristic equation.

Let us define
\begin{equation*}
 {_{k}\phi}_{i j}^{(\varepsilon)}(\rho) = \mathsf{E}_i e^{\rho \mu_j^{(\varepsilon)}} \chi( \nu_0^{(\varepsilon)} \wedge \nu_k^{(\varepsilon)} > \nu_j^{(\varepsilon)} ), \ \rho \in \mathbb{R}, \ i,j,k \in X.
\end{equation*}

If the states $\{ 1,\ldots,N \}$ is a communicating class and $\phi_{i i}^{(\varepsilon)}(\rho) \leq 1$ for some $i \neq 0$, then it can be shown (see, for example, Petersson (2015)) that the following relation holds for all $j \neq 0$,
\begin{equation} \label{eq:solidarityrel}
 ( 1 - \phi_{i i}^{(\varepsilon)}(\rho) )( 1 - {_{i}\phi}_{j j}^{(\varepsilon)}(\rho) ) = ( 1 - \phi_{j j}^{(\varepsilon)}(\rho) )( 1 - {_{j}\phi}_{i i}^{(\varepsilon)}(\rho) ).
\end{equation}

Relation \eqref{eq:solidarityrel} is useful in order to prove various solidarity properties for semi-Markov processes. In particular, if $\phi_{i i}^{(\varepsilon)}(\rho) = 1$, relation \eqref{eq:solidarityrel} reduces to
\begin{equation} \label{eq:solidarityrel2}
 ( 1 - \phi_{j j}^{(\varepsilon)}(\rho) )( 1 - {_{j}\phi}_{i i}^{(\varepsilon)}(\rho) ) = 0.
\end{equation}

From the regenerative property of the semi-Markov process it follows that
\begin{equation} \label{eq:solidarityrel3}
 \phi_{i i}^{(\varepsilon)}(\rho) = {_{j}\phi}_{i i}^{(\varepsilon)}(\rho) + {_{i}\phi}_{i j}^{(\varepsilon)}(\rho) \phi_{j i}^{(\varepsilon)}(\rho), \ j \neq 0,i.
\end{equation}

Since $\{ 1,\ldots,N \}$ is a communicating class, we have ${_{i}\phi}_{i j}^{(\varepsilon)}(\rho) > 0$ and $\phi_{j i}^{(\varepsilon)}(\rho) > 0$. So, if $\phi_{i i}^{(\varepsilon)}(\rho) = 1$ it follows from \eqref{eq:solidarityrel3} that ${_{j}\phi}_{i i}^{(\varepsilon)}(\rho) < 1$. From this and \eqref{eq:solidarityrel2} we can conclude that $\phi_{j j}^{(\varepsilon)}(\rho) = 1$ for all $j \neq 0$. Thus, we have the following lemma:

\begin{lemma} \label{lmm:solidarity}
Assume that we for some $\varepsilon \geq 0$ have that $g_{k j}^{(\varepsilon)} > 0$ for all $k,j \neq 0$. Then, if we for some $i \neq 0$ and $\rho \in \mathbb{R}$, have that $\phi_{i i}^{(\varepsilon)}(\rho) = 1$, it follows that $\phi_{j j}^{(\varepsilon)}(\rho) = 1$ for all $j \neq 0$.
\end{lemma}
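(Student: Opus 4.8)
The plan is to assemble the two relations recorded just above the statement---the solidarity identity \eqref{eq:solidarityrel} and the regenerative decomposition \eqref{eq:solidarityrel3}---into a short deduction. First I would observe that the hypothesis $g_{kj}^{(\varepsilon)} > 0$ for all $k,j \neq 0$ makes $\{1,\ldots,N\}$ a communicating class: from any $k \neq 0$ the state $j$ is reached before $0$ with positive probability, and this is precisely the structural assumption under which \eqref{eq:solidarityrel} and \eqref{eq:solidarityrel3} are valid. Since $\phi_{ii}^{(\varepsilon)}(\rho) = 1 \leq 1$, the side condition needed for \eqref{eq:solidarityrel} is met, and specializing that identity to $\phi_{ii}^{(\varepsilon)}(\rho) = 1$ collapses it to \eqref{eq:solidarityrel2}, namely $(1 - \phi_{jj}^{(\varepsilon)}(\rho))(1 - {_{j}\phi}_{ii}^{(\varepsilon)}(\rho)) = 0$ for every $j \neq 0$.

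Fix $j \neq 0$. The case $j = i$ is immediate, since the desired conclusion is then the hypothesis itself, so assume $j \neq i$. The strategy is to rule out the vanishing of the second factor in \eqref{eq:solidarityrel2}, which then forces the first factor to vanish. To this end I would invoke \eqref{eq:solidarityrel3}, written as $\phi_{ii}^{(\varepsilon)}(\rho) = {_{j}\phi}_{ii}^{(\varepsilon)}(\rho) + {_{i}\phi}_{ij}^{(\varepsilon)}(\rho)\,\phi_{ji}^{(\varepsilon)}(\rho)$. Because $\{1,\ldots,N\}$ is communicating, both ${_{i}\phi}_{ij}^{(\varepsilon)}(\rho)$ and $\phi_{ji}^{(\varepsilon)}(\rho)$ are strictly positive, so the product term is strictly positive and hence ${_{j}\phi}_{ii}^{(\varepsilon)}(\rho) < \phi_{ii}^{(\varepsilon)}(\rho) = 1$. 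Consequently $1 - {_{j}\phi}_{ii}^{(\varepsilon)}(\rho) > 0$, and \eqref{eq:solidarityrel2} forces $\phi_{jj}^{(\varepsilon)}(\rho) = 1$. As $j \neq 0$ was arbitrary, this yields the claim.

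I expect the only genuinely delicate point to be the strict positivity of the mixed moment generating functions ${_{i}\phi}_{ij}^{(\varepsilon)}(\rho)$ and $\phi_{ji}^{(\varepsilon)}(\rho)$. Each is a sum $\sum_n e^{\rho n}(\cdots)$ with $\rho$ real, so positivity reduces to positivity of the probability of the underlying defining event: for $\phi_{ji}^{(\varepsilon)}(\rho)$ this is the event that, started from $j$, the chain reaches $i$ before $0$, and for ${_{i}\phi}_{ij}^{(\varepsilon)}(\rho)$ it is the event that, started from $i$, the chain reaches $j$ before both $0$ and a return to $i$. A simple (non-self-intersecting) path through the communicating class witnesses each of these with positive probability, which is exactly the content of the hypothesis $g_{kj}^{(\varepsilon)} > 0$. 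Everything else is the purely algebraic manipulation of \eqref{eq:solidarityrel2} and \eqref{eq:solidarityrel3} already prepared in the preceding text.
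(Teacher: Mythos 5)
Your proposal is correct and follows essentially the same route as the paper: the paper's own proof (given in the text immediately preceding the lemma) also combines the collapsed solidarity identity \eqref{eq:solidarityrel2} with the regenerative decomposition \eqref{eq:solidarityrel3} and the strict positivity ${_{i}\phi}_{ij}^{(\varepsilon)}(\rho),\ \phi_{ji}^{(\varepsilon)}(\rho) > 0$ to conclude ${_{j}\phi}_{ii}^{(\varepsilon)}(\rho) < 1$ and hence $\phi_{jj}^{(\varepsilon)}(\rho) = 1$. Your added remarks (the trivial case $j=i$, and the loop-erasure justification that $g_{kj}^{(\varepsilon)} > 0$ yields the required strict positivity) only make explicit what the paper leaves implicit.
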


Let us now define the following characteristic equation,
\begin{equation} \label{eq:chareq}
 \phi_{i i}^{(\varepsilon)}(\rho) = 1.
\end{equation}
where $i \neq 0$ is arbitrary. The root of equation \eqref{eq:chareq} plays an important role for the asymptotic behaviour of the corresponding semi-Markov process, see, for example, Petersson (2016).

The following lemma gives limits of moment functionals and properties for the root of the characteristic equation.

\begin{lemma} \label{lmm:delta}
If conditions $\mathbf{A}$--$\mathbf{C}$ hold, then there exists $\delta \in (0,\beta]$ such that the following holds:
\begin{enumerate}
\item[$\mathbf{(i)}$] $\phi_{k j}^{(\varepsilon)}(\rho) \rightarrow \phi_{k j}^{(0)}(\rho) < \infty$, as $\varepsilon \rightarrow 0$, $\rho \leq \delta$, $k,j \neq 0$.
\item[$\mathbf{(ii)}$] $\omega_{k j s}^{(\varepsilon)}(\rho) \rightarrow \omega_{k j s}^{(0)}(\rho) < \infty$, as $\varepsilon \rightarrow 0$, $\rho \leq \delta$, $k,j,s \neq 0$.
\item[$\mathbf{(iii)}$] $\phi_{j j}^{(0)}(\delta) \in (1,\infty)$, $j \neq 0$.
\item[$\mathbf{(iv)}$] For sufficiently small $\varepsilon$, there exists a unique non-negative root $\rho^{(\varepsilon)}$ of the characteristic equation \eqref{eq:chareq} which does not depend on $i$.
\item[$\mathbf{(v)}$] $\rho^{(\varepsilon)} \rightarrow \rho^{(0)} < \delta$ as $\varepsilon \rightarrow 0$.
\end{enumerate}
\end{lemma}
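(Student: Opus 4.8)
Throughout, the plan is to reduce everything to Lemma~\ref{lmm:finite} and Lemma~\ref{lmm:solidarity}: finiteness and convergence of the functionals will be read off from the existence and continuity of the inverse matrices $(\mathbf{I} - {_{j}\mathbf{P}}^{(\varepsilon)}(\rho))^{-1}$, while the statements about the root will come from the monotonicity of $\rho \mapsto \phi_{ii}^{(\varepsilon)}(\rho)$. I would first settle the one-step functionals. Writing $Q_{ij}^{(\varepsilon)}(n) = p_{ij}^{(\varepsilon)} f_{ij}^{(\varepsilon)}(n)$, condition $\mathbf{A}$ gives $Q_{ij}^{(\varepsilon)}(n) \to Q_{ij}^{(0)}(n)$ for each $n$, while $\mathbf{C}\mathbf{(a)}$ yields, for all small $\varepsilon$, a bound $Q_{ij}^{(\varepsilon)}(n) \le M e^{-\beta n}$. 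Hence for any fixed $\rho < \beta$ the terms $e^{\rho n} Q_{ij}^{(\varepsilon)}(n)$ are dominated by the summable sequence $M e^{(\rho - \beta) n}$ uniformly in small $\varepsilon$, so dominated convergence gives $p_{ij}^{(\varepsilon)}(\rho) \to p_{ij}^{(0)}(\rho) < \infty$; the same applied to $\mathsf{E}_i e^{\rho \kappa_1^{(\varepsilon)}} = \sum_l p_{il}^{(\varepsilon)}(\rho)$ gives $\varphi_i^{(\varepsilon)}(\rho) \to \varphi_i^{(0)}(\rho) < \infty$ via \eqref{eq:defvarphi}. In particular ${_{j}\mathbf{P}}^{(\varepsilon)}(\rho) \to {_{j}\mathbf{P}}^{(0)}(\rho)$ entrywise and $\mathbf{p}_j^{(\varepsilon)}(\rho)$, $\widehat{\boldsymbol{\varphi}}_s^{(\varepsilon)}(\rho)$ converge, for every $\rho < \beta$. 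Also, by Fatou, $p_{ik}^{(0)}(\beta) < \infty$, so the limiting entries are finite and continuous on $\rho \le \beta$.

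Next I would locate the root and choose $\delta$ for the limiting process. The function $\phi_{ii}^{(0)}(\rho) = \sum_n e^{\rho n} g_{ii}^{(0)}(n)$ is finite, continuous and strictly increasing on $\rho \le \beta_i$ (its mass lies on $n \ge 1$ and is positive by $\mathbf{B}$), and $\phi_{ii}^{(0)}(0) = g_{ii}^{(0)} \le 1 < \phi_{ii}^{(0)}(\beta_i)$ by $\mathbf{C}\mathbf{(b)}$, so the intermediate value theorem produces a unique $\rho^{(0)} \in [0,\beta_i)$ with $\phi_{ii}^{(0)}(\rho^{(0)}) = 1$. By Lemma~\ref{lmm:solidarity}, $\phi_{jj}^{(0)}(\rho^{(0)}) = 1$ for all $j \ne 0$, and then \eqref{eq:solidarityrel3} (with $i$ replaced by any $k \ne 0,j$, all terms finite at $\rho^{(0)}$ and the product ${_{k}\phi}_{kj}^{(0)}(\rho^{(0)}) \phi_{jk}^{(0)}(\rho^{(0)})$ strictly positive since $\{1,\dots,N\}$ communicates) forces ${_{j}\phi}_{kk}^{(0)}(\rho^{(0)}) < 1$. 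As ${_{j}a}_{kk}^{(0)}(\rho^{(0)}) = (1 - {_{j}\phi}_{kk}^{(0)}(\rho^{(0)}))^{-1}$, the series \eqref{eq:matrixjA} converges, i.e. $(\mathbf{I} - {_{j}\mathbf{P}}^{(0)}(\rho^{(0)}))^{-1}$ exists for every $j \ne 0$. Since the entries depend continuously on $\rho$ for $\rho \le \beta$, so do the spectral radii $\mathrm{spr}({_{j}\mathbf{P}}^{(0)}(\rho))$, and I may fix $\delta \in (\rho^{(0)},\beta)$ small enough that $\mathrm{spr}({_{j}\mathbf{P}}^{(0)}(\rho)) < 1$ for all $\rho \le \delta$ and all $j \ne 0$. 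By Lemma~\ref{lmm:finite} this makes $\phi_{kj}^{(0)}(\rho)$ and $\omega_{kjs}^{(0)}(\rho)$ finite for $\rho \le \delta$, and strict monotonicity gives $\phi_{jj}^{(0)}(\delta) \in (1,\infty)$, which is $\mathbf{(iii)}$.

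Then I would pass to the perturbed process. For $\rho \le \delta$ and $j \ne 0$, the existence of $(\mathbf{I} - {_{j}\mathbf{P}}^{(0)}(\rho))^{-1}$ (i.e. $\det(\mathbf{I} - {_{j}\mathbf{P}}^{(0)}(\rho)) \ne 0$) together with ${_{j}\mathbf{P}}^{(\varepsilon)}(\rho) \to {_{j}\mathbf{P}}^{(0)}(\rho)$ gives, by continuity of the determinant and of matrix inversion, that $(\mathbf{I} - {_{j}\mathbf{P}}^{(\varepsilon)}(\rho))^{-1}$ exists for all small $\varepsilon$ and converges to its limit. Inserting this into \eqref{eq:matrixphialt} and \eqref{eq:systemomegamatrix}, and using the convergence of $\mathbf{p}_j^{(\varepsilon)}(\rho)$ and $\widehat{\boldsymbol{\varphi}}_s^{(\varepsilon)}(\rho)$, yields $\mathbf{(i)}$ and $\mathbf{(ii)}$. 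For the root, $\mathbf{(i)}$ gives $\phi_{ii}^{(\varepsilon)}(\delta) \to \phi_{ii}^{(0)}(\delta) > 1$ while $\phi_{ii}^{(\varepsilon)}(0) = g_{ii}^{(\varepsilon)} \le 1$, so for small $\varepsilon$ the strictly increasing continuous function $\phi_{ii}^{(\varepsilon)}(\cdot)$ meets $1$ at a unique $\rho^{(\varepsilon)} \in [0,\delta)$; conditions $\mathbf{A}$ and $\mathbf{B}$ make $g_{kj}^{(\varepsilon)} > 0$ for small $\varepsilon$, so Lemma~\ref{lmm:solidarity} renders $\rho^{(\varepsilon)}$ independent of $i$, giving $\mathbf{(iv)}$. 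For $\mathbf{(v)}$ I would sandwich: for small $\gamma > 0$ the pointwise convergence in $\mathbf{(i)}$ gives $\phi_{ii}^{(\varepsilon)}(\rho^{(0)} - \gamma) < 1 < \phi_{ii}^{(\varepsilon)}(\rho^{(0)} + \gamma)$ for small $\varepsilon$, so $\rho^{(\varepsilon)} \in (\rho^{(0)} - \gamma, \rho^{(0)} + \gamma)$, and $\rho^{(0)} < \delta$ holds by construction.

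I expect the genuine obstacle to lie in the second paragraph: upgrading the single datum $\phi_{ii}^{(0)}(\beta_i) \in (1,\infty)$ to the simultaneous existence of \emph{every} $(\mathbf{I} - {_{j}\mathbf{P}}^{(0)}(\rho))^{-1}$ on a common interval $\rho \le \delta$ that strictly exceeds $\rho^{(0)}$. The delicate point is that $\mathrm{spr}(\mathbf{P}^{(0)}(\rho)) > 1$ already for $\rho$ just above $\rho^{(0)}$, so finiteness of the taboo matrices cannot be inherited from the full matrix and must instead be extracted from the regenerative decomposition \eqref{eq:solidarityrel3} and the evaluation ${_{j}a}_{kk}^{(0)}(\rho^{(0)}) = (1 - {_{j}\phi}_{kk}^{(0)}(\rho^{(0)}))^{-1}$. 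Making this rigorous requires checking that all hitting functionals entering that decomposition are finite at $\rho^{(0)}$, so that the relation is not a vacuous $\infty = \infty$; this is precisely the solidarity content, which one either establishes here from the communicating-class structure or imports from Petersson (2015).
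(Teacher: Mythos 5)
Your proposal follows the same overall strategy as the paper's proof: locate the root $\rho^{(0)}$ of $\phi_{ii}^{(0)}(\rho)=1$ using monotonicity and condition $\mathbf{C}\mathbf{(b)}$, spread it to all diagonal indices via Lemma \ref{lmm:solidarity}, use the regenerative identity \eqref{eq:solidarityrel3} to obtain invertibility of $\mathbf{I}-{_{j}\mathbf{P}}^{(0)}(\rho^{(0)})$, push strictly past the root by continuity to define $\delta$, obtain convergence of the one-step functionals $p_{kj}^{(\varepsilon)}(\rho)$ from conditions $\mathbf{A}$ and $\mathbf{C}\mathbf{(a)}$, and finish $\mathbf{(i)}$--$\mathbf{(v)}$ by continuity of matrix inversion, monotonicity, and the sandwich argument. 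Your execution of $\mathbf{(i)}$--$\mathbf{(v)}$ matches the paper's; your use of spectral-radius continuity in place of the paper's determinant continuity is an equivalent (arguably cleaner) device, since convergence of the Neumann series is what is actually needed, and your explicit dominated-convergence argument fills in a step the paper states tersely.

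The one place where you genuinely deviate is also the one place with a gap. At $\rho^{(0)}$ you rearrange \eqref{eq:solidarityrel3} to get ${_{j}\phi}_{kk}^{(0)}(\rho^{(0)})<1$, identify the diagonal entries ${_{j}a}_{kk}^{(0)}(\rho^{(0)})=(1-{_{j}\phi}_{kk}^{(0)}(\rho^{(0)}))^{-1}<\infty$, and then assert that ``the series \eqref{eq:matrixjA} converges.'' Finiteness of the diagonal entries of $\sum_{m} ({_{j}\mathbf{P}}^{(0)}(\rho^{(0)}))^{m}$ does not by itself yield finiteness of all entries: by the first-passage decomposition underlying \eqref{eq:probinterpret}, the off-diagonal entries satisfy ${_{j}a}_{ik}^{(0)}(\rho^{(0)}) = {_{j}\phi}_{ik}^{(0)}(\rho^{(0)})\, {_{j}a}_{kk}^{(0)}(\rho^{(0)})$ for $i \neq k$, so you still need ${_{j}\phi}_{ik}^{(0)}(\rho^{(0)})<\infty$. (The implication diagonal-finite $\Rightarrow$ all-entries-finite is in fact true for finite nonnegative matrices, but it requires an induction or decomposition argument, not a one-line assertion.) The missing step is supplied by the other rearrangement of the very identity you already invoked, which is the paper's route: from
\begin{equation*}
 1=\phi_{kk}^{(0)}(\rho^{(0)}) = {_{i}\phi}_{kk}^{(0)}(\rho^{(0)}) + {_{k}\phi}_{ki}^{(0)}(\rho^{(0)})\, \phi_{ik}^{(0)}(\rho^{(0)})
\end{equation*}
and positivity of ${_{k}\phi}_{ki}^{(0)}(\rho^{(0)})$ (communication) one gets $\phi_{ik}^{(0)}(\rho^{(0)}) \le 1/{_{k}\phi}_{ki}^{(0)}(\rho^{(0)}) < \infty$, hence ${_{j}\phi}_{ik}^{(0)}(\rho^{(0)}) \le \phi_{ik}^{(0)}(\rho^{(0)}) < \infty$. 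The paper avoids the issue entirely by bounding all $\phi_{kj}^{(0)}(\rho')<\infty$ in this way and then invoking Lemma \ref{lmm:finite} ($\mathbf{(a)}$ implies $\mathbf{(c)}$) to get existence of the inverse, rather than assembling the matrix series entrywise. With this one additional line your argument closes, and the rest is correct.
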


\begin{proof}
Let $i \neq 0$ and $\beta_i \leq \beta$ be the values given in condition $\mathbf{C}$. It follows from conditions $\mathbf{B}$ and $\mathbf{C}$ that $\phi_{i i}^{(0)}(\rho)$ is a continuous and strictly increasing function for $\rho \leq \beta_i$. Since $\phi_{i i}^{(0)}(0) = g_{i i}^{(0)} \leq 1$ and $\phi_{i i}^{(0)}(\beta_i) > 1$, there exists a unique $\rho' \in [0,\beta_i)$ such that $\phi_{i i}^{(0)}(\rho') = 1$. Moreover, by Lemma \ref{lmm:solidarity},
\begin{equation} \label{eq:lmmdelta1}
 \phi_{j j}^{(0)}(\rho') = 1, \ j \neq 0.
\end{equation}

For all $j \neq 0$, we have
\begin{equation} \label{eq:lmmdelta2}
\begin{split}
 \phi_{j j}^{(0)}(\rho') = {_{k}\phi}_{j j}^{(0)}(\rho') + {_{j}\phi}_{j k}^{(0)}(\rho') \phi_{k j}^{(0)}(\rho'), \ k \neq 0,j.
\end{split}
\end{equation}

It follows from \eqref{eq:lmmdelta1}, \eqref{eq:lmmdelta2}, and condition $\mathbf{B}$, that
\begin{equation} \label{eq:lmmdelta3}
 \phi_{k j}^{(0)}(\rho') < \infty, \ k,j \neq 0.
\end{equation}

From \eqref{eq:lmmdelta3} and Lemma \ref{lmm:finite} we get that $\det( \mathbf{I} - {_{j}\mathbf{P}}^{(0)}(\rho') ) \neq 0$, for $j \neq 0$. Under condition $\mathbf{C}$, the elements of $\mathbf{I} - {_{j}\mathbf{P}}^{(0)}(\rho)$ are continuous functions for $\rho \leq \beta$. This implies that we for each $j \neq 0$ can find $\beta_j \in (\rho',\beta_i]$ such that $\det( \mathbf{I} - {_{j}\mathbf{P}}^{(0)}(\beta_j) ) \neq 0$. By condition $\mathbf{C}$ we also have that $p_{k j}^{(0)}(\beta_j) < \infty$ for $k \neq 0$, $j \in X$. It now follows from Lemma \ref{lmm:finite} that $\phi_{k j}^{(0)}(\beta_j) < \infty$, $k,j \neq 0$. If we define $\delta = \min \{ \beta_1,\ldots,\beta_N \}$, it follows that
\begin{equation} \label{eq:lmmdelta4}
 \phi_{k j}^{(0)}(\rho) < \infty, \ \rho \leq \delta, \ k,j \neq 0.
\end{equation}

Now, let $\rho \leq \delta$ be fixed. Relation \eqref{eq:lmmdelta4} and Lemma \ref{lmm:finite} imply that
\begin{equation} \label{eq:lmmdelta5}
 \det( \mathbf{I} - {_{j}\mathbf{P}}^{(0)}(\rho) ) \neq 0, \ j \neq 0.
\end{equation}

Note that we have
\begin{equation} \label{eq:lmmdelta6}
 p_{k j}^{(\varepsilon)}(\rho) = p_{k j}^{(\varepsilon)} \sum_{n=0}^\infty e^{\rho n} f_{k j}^{(\varepsilon)}(n), \ k,j \in X.
\end{equation}
Since $f_{k j}^{(\varepsilon)}(n)$ are proper probability distributions, it follows from \eqref{eq:lmmdelta6} and conditions $\mathbf{A}$ and $\mathbf{C}$ that
\begin{equation} \label{eq:lmmdelta7}
 p_{k j}^{(\varepsilon)}(\rho) \rightarrow p_{k j}^{(0)}(\rho) < \infty, \ \text{as} \ \varepsilon \rightarrow 0, \ k \neq 0, \ j \in X.
\end{equation}

It follows from \eqref{eq:lmmdelta5} and \eqref{eq:lmmdelta7} that there exists $\varepsilon_1 > 0$ such that we for all $\varepsilon \leq \varepsilon_1$ have that $\det( \mathbf{I} - {_{j}\mathbf{P}}^{(\varepsilon)}(\rho) ) \neq 0$ and $p_{k j}^{(\varepsilon)}(\rho) < \infty$, for all $k,j \neq 0$. Using Lemma \ref{lmm:finite} once again, it now follows that $\phi_{k j}^{(\varepsilon)}(\rho) < \infty$, $k,j \neq 0$, for all $\varepsilon \leq \varepsilon_1$. Moreover, in this case, the system of linear equations \eqref{eq:systemphimatrix} has a unique solution for $\varepsilon \leq \varepsilon_1$ given by
\begin{equation} \label{eq:lmmdelta8}
 \mathsf{\Phi}_j^{(\varepsilon)}(\rho) = ( \mathbf{I} - {_{j}\mathbf{P}}^{(\varepsilon)}(\rho) )^{-1} \mathbf{p}_j^{(\varepsilon)}(\rho), \ j \neq 0.
\end{equation}

From \eqref{eq:lmmdelta7} and \eqref{eq:lmmdelta8} it follows that
\begin{equation*}
 \phi_{k j}^{(\varepsilon)}(\rho) \rightarrow \phi_{k j}^{(0)}(\rho) < \infty, \ \text{as} \ \varepsilon \rightarrow 0, \ k,j \neq 0.
\end{equation*}

This completes the proof of part $\mathbf{(i)}$.

For the proof of part $\mathbf{(ii)}$ we first note that, since $\phi_{k j}^{(\varepsilon)}(\rho) < \infty$ for $\varepsilon \leq \varepsilon_1$, $k,j \neq 0$, it follows from Lemma \ref{lmm:finite} that $\omega_{k j s}^{(\varepsilon)}(\rho) < \infty$ for $\varepsilon \leq \varepsilon_1$, $k,j,s \neq 0$. From this, and arguments given above, we see that the system of linear equations given by relation \eqref{eq:systemomegamatrix} has a unique solution for $\varepsilon \leq \varepsilon_1$ given by
\begin{equation} \label{eq:lmmdelta10}
 \boldsymbol{\omega}_{j s}^{(\varepsilon)}(\rho) = ( \mathbf{I} - {_{j}\mathbf{P}}^{(\varepsilon)}(\rho) )^{-1} \widehat{\boldsymbol{\varphi}}_s^{(\varepsilon)}(\rho), \ j,s \neq 0.
\end{equation}

Now, since $\mathsf{E}_i e^{\rho \kappa_1^{(\varepsilon)}} = \sum_{j \in X} p_{i j}^{(\varepsilon)}(\rho)$, it follows from \eqref{eq:defvarphi} and \eqref{eq:lmmdelta7} that $\varphi_i^{(\varepsilon)}(\rho) \rightarrow \varphi_i^{(0)}(\rho) < \infty$ as $\varepsilon \rightarrow 0$, $i \neq 0$. Using this and relations \eqref{eq:lmmdelta7} and \eqref{eq:lmmdelta10} we can conclude that part $\mathbf{(ii)}$ holds.

By part $\mathbf{(i)}$ we have, in particular, $\phi_{j j}^{(\varepsilon)}(\delta) \rightarrow \phi_{j j}^{(0)}(\delta) < \infty$ as $\varepsilon \rightarrow 0$, for all $j \neq 0$. Furthermore, since $\rho' < \delta$ and $\phi_{j j}^{(0)}(\rho)$ is strictly increasing for $\rho \leq \delta$, it follows from \eqref{eq:lmmdelta1} that $\phi_{j j}^{(0)}(\delta) > 1$, $j \neq 0$. This proves part $\mathbf{(iii)}$.

Let us now prove part $\mathbf{(iv)}$.

It follows from $\mathbf{(i)}$ and $\mathbf{(iii)}$ that we can find $\varepsilon_2 > 0$ such that $\phi_{j j}^{(\varepsilon)}(\delta) \in (1,\infty)$, $j \neq 0$, for all $\varepsilon \leq \varepsilon_2$. By conditions $\mathbf{A}$ and $\mathbf{B}$ there exists $\varepsilon_3 > 0$ such that, for each $i \neq 0$ and $\varepsilon \leq \varepsilon_3$, the function $g_{i i}^{(\varepsilon)}(n)$ is not concentrated at zero. Thus, for every $i \neq 0$ and $\varepsilon \leq \min \{ \varepsilon_2, \varepsilon_3 \}$, we have that $\phi_{i i}^{(\varepsilon)}(\rho)$ is a continuous and strictly increasing function for $\rho \in [0,\delta]$. Since $\phi_{i i}^{(\varepsilon)}(0) = g_{i i}^{(\varepsilon)} \leq 1$ and $\phi_{i i}^{(\varepsilon)}(\delta) > 1$, there exists a unique $\rho_i^{(\varepsilon)} \in [0,\delta)$ such that $\phi_{i i}^{(\varepsilon)}(\rho_i^{(\varepsilon)}) = 1$. By Lemma \ref{lmm:solidarity}, the root of the characteristic equation does not depend on $i$ so we can write $\rho^{(\varepsilon)}$ instead of $\rho_i^{(\varepsilon)}$. This proves part $\mathbf{(iv)}$.

Finally, we show that $\rho^{(\varepsilon)} \rightarrow \rho^{(0)}$ as $\varepsilon \rightarrow 0$.

Let $\gamma > 0$ such that $\rho^{(0)} + \gamma \leq \delta$ be arbitrary. Then $\phi_{i i}^{(0)}(\rho^{(0)} - \gamma) < 1$ and $\phi_{i i}^{(0)}(\rho^{(0)} + \gamma) > 1$. From this and part $\mathbf{(i)}$ we get that there exists $\varepsilon_4 > 0$ such that $\phi_{i i}^{(\varepsilon)}(\rho^{(0)} - \gamma) < 1$ and $\phi_{i i}^{(\varepsilon)}(\rho^{(0)} + \gamma) > 1$, for all $\varepsilon \leq \varepsilon_4$. So, it follows that $|\rho^{(\varepsilon)} - \rho^{(0)}| < \gamma$ for $\varepsilon \leq \min \{ \varepsilon_2, \varepsilon_3, \varepsilon_4 \}$. This completes the proof of Lemma \ref{lmm:delta}.
\end{proof}

\section{Expansions of Moment Functionals} \label{sec:expansions}

In this section, asymptotic expansions for mixed power-exponential moment functionals are constructed. The main results are given by Theorems \ref{thm:expansionphi} and \ref{thm:expansionomega}.

Let us define the following mixed power-exponential moment functionals for distributions of first hitting times,
\begin{equation*}
 \phi_{i j}^{(\varepsilon)}(\rho,r) = \sum_{n=0}^\infty n^r e^{\rho n} g_{i j}^{(\varepsilon)}(n), \ \rho \in \mathbb{R}, \ r=0,1,\ldots, \ i,j \in X.
\end{equation*}
By definition, $\phi_{i j}^{(\varepsilon)}(\rho,0) = \phi_{i j}^{(\varepsilon)}(\rho)$.

We also define the following mixed power-exponential moment functionals for transition probabilities,
\begin{equation*}
 p_{i j}^{(\varepsilon)}(\rho,r) = \sum_{n=0}^\infty n^r e^{\rho n} Q_{i j}^{(\varepsilon)}(n), \ \rho \in \mathbb{R}, \ r=0,1,\ldots, \ i,j \in X.
\end{equation*}
By definition, $p_{i j}^{(\varepsilon)}(\rho,0) = p_{i j}^{(\varepsilon)}(\rho)$.

It follows from conditions $\mathbf{A}$--$\mathbf{C}$ and Lemma \ref{lmm:delta} that, for $\rho < \delta$ and sufficiently small $\varepsilon$, the functions $\phi_{i j}^{(\varepsilon)}(\rho)$ and $p_{i j}^{(\varepsilon)}(\rho)$ are arbitrarily many times differentiable with respect to $\rho$, and the derivatives of order $r$ are given by $\phi_{i j}^{(\varepsilon)}(\rho,r)$ and $p_{i j}^{(\varepsilon)}(\rho,r)$, respectively.

Recall from Section \ref{sec:systems} that the following system of linear equations holds,
\begin{equation} \label{eq:systemphi0}
 \phi_{i j}^{(\varepsilon)}(\rho) = p_{i j}^{(\varepsilon)}(\rho) + \sum_{l \neq 0,j} p_{i l}^{(\varepsilon)}(\rho) \phi_{l j}^{(\varepsilon)}(\rho), \ i,j \neq 0.
\end{equation}
Differentiating relation \eqref{eq:systemphi0} gives
\begin{equation} \label{eq:systemphir}
 \phi_{i j}^{(\varepsilon)}(\rho,r) = \lambda_{i j}^{(\varepsilon)}(\rho,r) + \sum_{l \neq 0,j} p_{i l}^{(\varepsilon)}(\rho) \phi_{l j}^{(\varepsilon)}(\rho,r), \ r=1,2,\ldots, \ i,j \neq 0,
\end{equation}
where
\begin{equation} \label{eq:lambdar}
 \lambda_{i j}^{(\varepsilon)}(\rho,r) = p_{i j}^{(\varepsilon)}(\rho,r) + \sum_{m=1}^r \binom{r}{m} \sum_{l \neq 0,j} p_{i l}^{(\varepsilon)}(\rho,m) \phi_{l j}^{(\varepsilon)}(\rho,r-m).
\end{equation}

In order to write relations \eqref{eq:systemphi0}, \eqref{eq:systemphir}, and \eqref{eq:lambdar} in matrix form, let us define the following column vectors,
\begin{equation} \label{eq:vectorphir}
 \mathsf{\Phi}_j^{(\varepsilon)}(\rho,r) =
 \begin{bmatrix}
  \phi_{1 j}^{(\varepsilon)}(\rho,r) & \cdots & \phi_{N j}^{(\varepsilon)}(\rho,r)
 \end{bmatrix}
 ^T, \ j \neq 0,
\end{equation}
\begin{equation} \label{eq:vectorpr}
 \mathbf{p}_j^{(\varepsilon)}(\rho,r) =
 \begin{bmatrix}
  p_{1 j}^{(\varepsilon)}(\rho,r) & \cdots & p_{N j}^{(\varepsilon)}(\rho,r)
 \end{bmatrix}
 ^T, \ j \neq 0,
\end{equation}
\begin{equation} \label{eq:vectorlambdar}
 \boldsymbol{\lambda}_j^{(\varepsilon)}(\rho,r) =
 \begin{bmatrix}
  \lambda_{1 j}^{(\varepsilon)}(\rho,r) & \cdots & \lambda_{N j}^{(\varepsilon)}(\rho,r)
 \end{bmatrix}
 ^T, \ j \neq 0.
\end{equation}

Let us also, for $j \neq 0$, define $N \times N$-matrices ${_{j}\mathbf{P}}^{(\varepsilon)}(\rho, r) = \| {_{j}p}_{i k}^{(\varepsilon)}(\rho,r) \|$ where the elements are given by
\begin{equation} \label{eq:matrixjPr}
 {_{j}p}_{i k}^{(\varepsilon)}(\rho, r) = \left\{
 \begin{array}{l l}
  p_{i k}^{(\varepsilon)}(\rho,r) & i=1,\ldots,N, \ k \neq j, \\
  0 & i=1,\ldots,N, \ k = j. 
 \end{array}
 \right.
\end{equation}

Using \eqref{eq:systemphi0}--\eqref{eq:matrixjPr} we can for any $j \neq 0$ write the following recursive systems of linear equations,
\begin{equation} \label{eq:systemphi0matrix}
 \mathsf{\Phi}_j^{(\varepsilon)}(\rho) = \mathbf{p}_j^{(\varepsilon)}(\rho) + {_{j}\mathbf{P}}^{(\varepsilon)}(\rho) \mathsf{\Phi}_j^{(\varepsilon)}(\rho),
\end{equation}
and, for $r = 1,2,\ldots,$
\begin{equation} \label{eq:systemphirmatrix}
 \mathsf{\Phi}_j^{(\varepsilon)}(\rho,r) = \boldsymbol{\lambda}_j^{(\varepsilon)}(\rho,r) + {_{j}\mathbf{P}}^{(\varepsilon)}(\rho) \mathsf{\Phi}_j^{(\varepsilon)}(\rho,r),
\end{equation}
where
\begin{equation} \label{eq:systemlambdarmatrix}
 \boldsymbol{\lambda}_j^{(\varepsilon)}(\rho,r) = \mathbf{p}_j^{(\varepsilon)}(\rho,r) + \sum_{m=1}^r \binom{r}{m} {_{j}\mathbf{P}}^{(\varepsilon)}(\rho,m) \mathsf{\Phi}_j^{(\varepsilon)}(\rho,r-m).
\end{equation}

Let us now introduce the following perturbation condition, which is assumed to hold for some $\rho < \delta$, where $\delta$ is the parameter in Lemma \ref{lmm:delta}:
\begin{enumerate}
\item[$\mathbf{P_k^*}$:] $p_{i j}^{(\varepsilon)}(\rho,r) = p_{i j}^{(0)}(\rho,r) + p_{i j}[\rho,r,1] \varepsilon + \cdots + p_{i j}[\rho,r,k-r] \varepsilon^{k-r} + o(\varepsilon^{k-r})$, for $r=0,\ldots,k$, $i \neq 0$, $j \in X$, where $|p_{i j}[\rho,r,n]| < \infty$, for $r=0,\ldots,k$, $n=1,\ldots,k-r$, $i \neq 0$, $j \in X$.
\end{enumerate}
For convenience, we denote $p_{i j}^{(0)}(\rho,r) = p_{i j}[\rho,r,0]$, for $r=0,\ldots,k$.

Note that if condition $\mathbf{P_k^*}$ holds, then, for $r=0,\ldots,k$, we have the following asymptotic matrix expansions,
\begin{equation} \label{eq:expansionjPr}
 {_{j}\mathbf{P}}^{(\varepsilon)}(\rho,r) = {_{j}\mathbf{P}}[\rho,r,0] + {_{j}\mathbf{P}}[\rho,r,1] \varepsilon + \cdots + {_{j}\mathbf{P}}[\rho,r,k-r] \varepsilon^{k-r} + \mathbf{o}(\varepsilon^{k-r}),
\end{equation}
\begin{equation} \label{eq:expansionpr}
 \mathbf{p}_j^{(\varepsilon)}(\rho,r) = \mathbf{p}_j[\rho,r,0] + \mathbf{p}_j[\rho,r,1] \varepsilon + \cdots + \mathbf{p}_j[\rho,r,k-r] \varepsilon^{k-r} + \mathbf{o}(\varepsilon^{k-r}).
\end{equation}
Here, and in what follows, $\mathbf{o}(\varepsilon^p)$ denotes a matrix-valued function of $\varepsilon$ where all elements are of order $o(\varepsilon^p)$. The coefficients in \eqref{eq:expansionjPr} are $N \times N$-matrices ${_{j}\mathbf{P}}[\rho,r,n] = \| {_{j}p}_{i k}[\rho,r,n] \|$ with elements given by
\begin{equation*}
 {_{j}p}_{i k}[\rho,r,n] = \left\{
 \begin{array}{l l}
  p_{i k}[\rho,r,n] & i=1,\ldots,N, \ k \neq j, \\
  0 & i=1,\ldots,N, \ k=j,
 \end{array}
 \right.
\end{equation*}
and the coefficients in \eqref{eq:expansionpr} are column vectors defined by
\begin{equation*}
 \mathbf{p}_j[\rho,r,n] =
 \begin{bmatrix}
  p_{1 j}[\rho,r,n] & \cdots & p_{N j}[\rho,r,n]
 \end{bmatrix}
 ^T.
\end{equation*}

Let us now define the following matrix, which will play an important role in what follows,
\begin{equation*}
 {_{j}\mathbf{U}}^{(\varepsilon)}(\rho) = ( \mathbf{I} - {_{j}\mathbf{P}}^{(\varepsilon)}(\rho) )^{-1}.
\end{equation*}
Under conditions $\mathbf{A}$--$\mathbf{C}$, it follows from Lemmas \ref{lmm:finite} and \ref{lmm:delta} that ${_{j}\mathbf{U}}^{(\varepsilon)}(\rho)$ is well defined for $\rho \leq \delta$ and sufficiently small $\varepsilon$.

The following lemma gives an asymptotic expansion for ${_{j}\mathbf{U}}^{(\varepsilon)}(\rho)$.
\begin{lemma} \label{lmm:expansionjU}
Assume that conditions $\mathbf{A}$--$\mathbf{C}$ and $\mathbf{P_k^*}$ hold. Then we have the following asymptotic expansion,
\begin{equation} \label{eq:expansionjU}
 {_{j}\mathbf{U}}^{(\varepsilon)}(\rho) = {_{j}\mathbf{U}}[\rho,0] + {_{j}\mathbf{U}}[\rho,1] \varepsilon + \cdots + {_{j}\mathbf{U}}[\rho,k] \varepsilon^k + \mathbf{o}(\varepsilon^k),
\end{equation}
where
\begin{equation} \label{eq:coeffjU}
 {_{j}\mathbf{U}}[\rho,n] = \left\{
 \begin{array}{l l}
  ( \mathbf{I} - {_{j}\mathbf{P}}^{(0)}(\rho) )^{-1} & n=0, \\
  {_{j}\mathbf{U}}[\rho,0] \sum_{q=1}^n {_{j}\mathbf{P}}[\rho,0,q] {_{j}\mathbf{U}}[\rho,n-q] & n=1,\ldots,k.
 \end{array}
 \right.
\end{equation}
\end{lemma}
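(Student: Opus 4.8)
The plan is to establish the expansion for ${_{j}\mathbf{U}}^{(\varepsilon)}(\rho)$ by exploiting the defining identity $({_{j}\mathbf{U}}^{(\varepsilon)}(\rho))^{-1} = \mathbf{I} - {_{j}\mathbf{P}}^{(\varepsilon)}(\rho)$ together with the known expansion of ${_{j}\mathbf{P}}^{(\varepsilon)}(\rho)$ from \eqref{eq:expansionjPr} (the case $r=0$). First I would record that by Lemmas \ref{lmm:finite} and \ref{lmm:delta}, under conditions $\mathbf{A}$--$\mathbf{C}$ the inverse ${_{j}\mathbf{U}}^{(\varepsilon)}(\rho)$ exists for $\rho \leq \delta$ and all sufficiently small $\varepsilon$, including $\varepsilon = 0$; this is what allows us to even speak of ${_{j}\mathbf{U}}[\rho,0] = (\mathbf{I} - {_{j}\mathbf{P}}^{(0)}(\rho))^{-1}$ and guarantees the leading coefficient is finite.

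The core of the argument is the matrix relation
\begin{equation*}
 {_{j}\mathbf{U}}^{(\varepsilon)}(\rho) ( \mathbf{I} - {_{j}\mathbf{P}}^{(\varepsilon)}(\rho) ) = \mathbf{I}.
\end{equation*}
I would substitute the ansatz ${_{j}\mathbf{U}}^{(\varepsilon)}(\rho) = \sum_{n=0}^k {_{j}\mathbf{U}}[\rho,n] \varepsilon^n + \mathbf{o}(\varepsilon^k)$ together with the expansion \eqref{eq:expansionjPr} (at $r=0$) into this identity and collect terms by powers of $\varepsilon$. Writing ${_{j}\mathbf{P}}^{(\varepsilon)}(\rho) = {_{j}\mathbf{P}}^{(0)}(\rho) + \sum_{q=1}^{k} {_{j}\mathbf{P}}[\rho,0,q]\varepsilon^q + \mathbf{o}(\varepsilon^k)$, the coefficient of $\varepsilon^0$ yields ${_{j}\mathbf{U}}[\rho,0](\mathbf{I} - {_{j}\mathbf{P}}^{(0)}(\rho)) = \mathbf{I}$, which is exactly the $n=0$ case. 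For $n \geq 1$, matching the coefficient of $\varepsilon^n$ gives
\begin{equation*}
 {_{j}\mathbf{U}}[\rho,n] ( \mathbf{I} - {_{j}\mathbf{P}}^{(0)}(\rho) ) - \sum_{q=1}^n {_{j}\mathbf{U}}[\rho,n-q] {_{j}\mathbf{P}}[\rho,0,q] = \mathbf{0},
\end{equation*}
and solving for ${_{j}\mathbf{U}}[\rho,n]$ by right-multiplying with ${_{j}\mathbf{U}}[\rho,0] = (\mathbf{I} - {_{j}\mathbf{P}}^{(0)}(\rho))^{-1}$ produces the recursion \eqref{eq:coeffjU}. (One must take care with the order of factors, since matrices need not commute; using the left identity ${_{j}\mathbf{U}}^{(\varepsilon)}(\rho)(\mathbf{I} - {_{j}\mathbf{P}}^{(\varepsilon)}(\rho)) = \mathbf{I}$ versus the right identity places ${_{j}\mathbf{P}}[\rho,0,q]$ on the correct side to match the stated formula.) By induction, each ${_{j}\mathbf{U}}[\rho,n]$ is a finite matrix, since it is built from finitely many products of the finite matrices ${_{j}\mathbf{P}}[\rho,0,q]$ and ${_{j}\mathbf{U}}[\rho,0]$.

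The remaining issue, and the main technical obstacle, is to justify that the remainder is genuinely $\mathbf{o}(\varepsilon^k)$ rather than merely a formal power series. The cleanest route is to argue that matrix inversion is a smooth operation near an invertible matrix: since ${_{j}\mathbf{P}}^{(\varepsilon)}(\rho) \to {_{j}\mathbf{P}}^{(0)}(\rho)$ and $\mathbf{I} - {_{j}\mathbf{P}}^{(0)}(\rho)$ is invertible, the map $\varepsilon \mapsto (\mathbf{I} - {_{j}\mathbf{P}}^{(\varepsilon)}(\rho))^{-1}$ inherits the asymptotic expansion of its argument up to order $k$. Concretely, I would write ${_{j}\mathbf{U}}^{(\varepsilon)}(\rho) = {_{j}\mathbf{U}}[\rho,0] (\mathbf{I} - \mathbf{D}^{(\varepsilon)})^{-1}$, where $\mathbf{D}^{(\varepsilon)} = {_{j}\mathbf{U}}[\rho,0]({_{j}\mathbf{P}}^{(\varepsilon)}(\rho) - {_{j}\mathbf{P}}^{(0)}(\rho)) = \mathbf{O}(\varepsilon)$, expand $(\mathbf{I} - \mathbf{D}^{(\varepsilon)})^{-1} = \sum_{m=0}^{k} (\mathbf{D}^{(\varepsilon)})^m + \mathbf{o}(\varepsilon^k)$ using the Neumann series (valid since $\mathbf{D}^{(\varepsilon)} \to \mathbf{0}$), and then substitute the polynomial expansion of $\mathbf{D}^{(\varepsilon)}$ in $\varepsilon$. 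Collecting powers up to $\varepsilon^k$ and absorbing all higher-order products into the remainder gives a bona fide $\mathbf{o}(\varepsilon^k)$ error, and uniqueness of asymptotic expansions then identifies the coefficients with those obtained from the recursion above. This confirms \eqref{eq:expansionjU} and \eqref{eq:coeffjU}.
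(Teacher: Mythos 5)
Your core strategy coincides with the paper's: the paper likewise gets existence of ${_{j}\mathbf{U}}^{(\varepsilon)}(\rho)$ for small $\varepsilon$ from Lemmas \ref{lmm:finite} and \ref{lmm:delta}, and then obtains \eqref{eq:coeffjU} by substituting the expansions into the defining identity and equating coefficients of $\varepsilon^n$. The one place you genuinely go beyond the paper is the last paragraph: the paper simply asserts that the existence of the expansion \eqref{eq:expansionjU} under $\mathbf{P_k^*}$ ``is known,'' whereas you prove it via the Neumann-series factorization. That addition makes the argument self-contained and is worthwhile.

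However, both of your key computations have the matrix factors in the wrong order --- precisely the pitfall your own parenthetical warns about. First, the paper expands $\mathbf{I} = (\mathbf{I} - {_{j}\mathbf{P}}^{(\varepsilon)}(\rho))\,{_{j}\mathbf{U}}^{(\varepsilon)}(\rho)$, i.e.\ with ${_{j}\mathbf{U}}$ on the \emph{right}; equating coefficients there and left-multiplying by ${_{j}\mathbf{U}}[\rho,0]$ yields \eqref{eq:coeffjU} exactly. Your identity ${_{j}\mathbf{U}}^{(\varepsilon)}(\rho)(\mathbf{I} - {_{j}\mathbf{P}}^{(\varepsilon)}(\rho)) = \mathbf{I}$ instead yields
\begin{equation*}
 {_{j}\mathbf{U}}[\rho,n] = \Bigl( \sum_{q=1}^n {_{j}\mathbf{U}}[\rho,n-q]\, {_{j}\mathbf{P}}[\rho,0,q] \Bigr)\, {_{j}\mathbf{U}}[\rho,0],
\end{equation*}
the mirror image of \eqref{eq:coeffjU}. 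This is not fatal, since the coefficients of an asymptotic expansion are unique and both identities hold, so the two recursions determine the same matrices; but as written your displayed computation does not ``produce the recursion \eqref{eq:coeffjU}'' --- to get that formula directly you must use the paper's orientation of the identity, or explicitly invoke uniqueness to pass between the two forms. Second, in the existence step, with $\mathbf{D}^{(\varepsilon)} = {_{j}\mathbf{U}}[\rho,0]\,({_{j}\mathbf{P}}^{(\varepsilon)}(\rho) - {_{j}\mathbf{P}}^{(0)}(\rho))$ the factorization $\mathbf{I} - {_{j}\mathbf{P}}^{(\varepsilon)}(\rho) = (\mathbf{I} - {_{j}\mathbf{P}}^{(0)}(\rho))(\mathbf{I} - \mathbf{D}^{(\varepsilon)})$ gives, upon inversion,
\begin{equation*}
 {_{j}\mathbf{U}}^{(\varepsilon)}(\rho) = (\mathbf{I} - \mathbf{D}^{(\varepsilon)})^{-1}\, {_{j}\mathbf{U}}[\rho,0],
\end{equation*}
not ${_{j}\mathbf{U}}[\rho,0]\,(\mathbf{I} - \mathbf{D}^{(\varepsilon)})^{-1}$ as you wrote; the two agree only when ${_{j}\mathbf{P}}^{(\varepsilon)}(\rho) - {_{j}\mathbf{P}}^{(0)}(\rho)$ commutes with $\mathbf{I} - {_{j}\mathbf{P}}^{(0)}(\rho)$. (Alternatively, keep your order of factors but redefine $\mathbf{D}^{(\varepsilon)} = ({_{j}\mathbf{P}}^{(\varepsilon)}(\rho) - {_{j}\mathbf{P}}^{(0)}(\rho))\,{_{j}\mathbf{U}}[\rho,0]$.) Both slips are local and repaired in one line each; with them fixed, your argument is correct, and the tail bound $\sum_{m > k} (\mathbf{D}^{(\varepsilon)})^m = \mathbf{O}(\varepsilon^{k+1})$ does give a genuine $\mathbf{o}(\varepsilon^k)$ remainder.
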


\begin{proof}
As already mentioned above, conditions $\mathbf{A}$--$\mathbf{C}$ ensure us that the inverse ${_{j}\mathbf{U}}^{(\varepsilon)}(\rho)$ exists for sufficiently small $\varepsilon$. In this case, it is known that the expansion \eqref{eq:expansionjU} exists under condition $\mathbf{P_k^*}$. To see that the coefficients are given by \eqref{eq:coeffjU}, first note that
\begin{equation} \label{eq:lmmexpansionjU}
\begin{split}
 \mathbf{I} &= ( \mathbf{I} - {_{j}\mathbf{P}}^{(\varepsilon)}(\rho) ) {_{j}\mathbf{U}}^{(\varepsilon)}(\rho) \\
 &= ( \mathbf{I} - {_{j}\mathbf{P}}^{(0)}(\rho) - {_{j}\mathbf{P}}[\rho,0,1] \varepsilon - \cdots - {_{j}\mathbf{P}}[\rho,0,k] \varepsilon^k + \mathbf{o}(\varepsilon^k) ) \\
 &\times ( {_{j}\mathbf{U}}[\rho,0] + {_{j}\mathbf{U}}[\rho,1] \varepsilon + \cdots + {_{j}\mathbf{U}}[\rho,k] \varepsilon^k + \mathbf{o}(\varepsilon^k) ).
\end{split}
\end{equation}
By first expanding both sides of equation \eqref{eq:lmmexpansionjU} and then, for $n=0,1,\ldots,k$, equating coefficients of $\varepsilon^n$ in the left and right hand sides, we get formula \eqref{eq:coeffjU}.
\end{proof}

We are now ready to construct asymptotic expansions for $\mathsf{\Phi}_j^{(\varepsilon)}(\rho,r)$.

\begin{theorem} \label{thm:expansionphi}
Assume that conditions $\mathbf{A}$--$\mathbf{C}$ and $\mathbf{P_k^*}$ hold. Then:
\begin{enumerate}
\item[$\mathbf{(i)}$] We have the following asymptotic expansion,
\begin{equation*}
 \mathsf{\Phi}_j^{(\varepsilon)}(\rho) = \mathsf{\Phi}_j[\rho,0,0] + \mathsf{\Phi}_j[\rho,0,1] \varepsilon + \cdots + \mathsf{\Phi}_j[\rho,0,k] \varepsilon^k + \mathbf{o}(\varepsilon^k),
\end{equation*}
where
\begin{equation*}
 \mathsf{\Phi}_j[\rho,0,n] = \left\{
 \begin{array}{l l}
  \mathsf{\Phi}_j^{(0)}(\rho) & n=0, \\
  \sum_{q=0}^n {_{j}\mathbf{U}}[\rho,q] \mathbf{p}_j[\rho,0,n-q] & n=1,\ldots,k.
 \end{array}
 \right.
\end{equation*}
\item[$\mathbf{(ii)}$] For $r=1,\ldots,k$, we have the following asymptotic expansions,
\begin{equation*}
 \mathsf{\Phi}_j^{(\varepsilon)}(\rho,r) = \mathsf{\Phi}_j[\rho,r,0] + \mathsf{\Phi}_j[\rho,r,1] \varepsilon + \cdots + \mathsf{\Phi}_j[\rho,r,k-r] \varepsilon^{k-r} + \mathbf{o}(\varepsilon^{k-r}),
\end{equation*}
where
\begin{equation*}
 \mathsf{\Phi}_j[\rho,r,n] = \left\{
 \begin{array}{l l}
  \mathsf{\Phi}_j^{(0)}(\rho,r) & n=0, \\
  \sum_{q=0}^n {_{j}\mathbf{U}}[\rho,q] \boldsymbol{\lambda}_j[\rho,r,n-q] & n=1,\ldots,k-r,
 \end{array}
 \right.
\end{equation*}
and, for $t=0,\ldots,k-r$,
\begin{equation*}
 \boldsymbol{\lambda}_j[\rho,r,t] = \mathbf{p}_j[\rho,r,t] + \sum_{m=1}^r \binom{r}{m} \sum_{q=0}^t {_{j}\mathbf{P}}[\rho,m,q] \mathsf{\Phi}_j[\rho,r-m,t-q].
\end{equation*}
\end{enumerate}
\end{theorem}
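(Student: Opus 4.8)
The plan is to solve the linear systems \eqref{eq:systemphi0matrix} and \eqref{eq:systemphirmatrix} explicitly in terms of the matrix ${_{j}\mathbf{U}}^{(\varepsilon)}(\rho)$ and then multiply the resulting asymptotic expansions. By Lemmas \ref{lmm:finite} and \ref{lmm:delta}, for $\rho < \delta$ and $\varepsilon$ small enough the inverse ${_{j}\mathbf{U}}^{(\varepsilon)}(\rho) = (\mathbf{I} - {_{j}\mathbf{P}}^{(\varepsilon)}(\rho))^{-1}$ exists, so \eqref{eq:systemphi0matrix} yields
\begin{equation*}
 \mathsf{\Phi}_j^{(\varepsilon)}(\rho) = {_{j}\mathbf{U}}^{(\varepsilon)}(\rho) \mathbf{p}_j^{(\varepsilon)}(\rho),
\end{equation*}
and, for $r \geq 1$, \eqref{eq:systemphirmatrix} yields $\mathsf{\Phi}_j^{(\varepsilon)}(\rho,r) = {_{j}\mathbf{U}}^{(\varepsilon)}(\rho) \boldsymbol{\lambda}_j^{(\varepsilon)}(\rho,r)$. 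The key tool is the elementary rule that if two matrix-valued functions admit asymptotic expansions with finite coefficients to orders $p$ and $q$, then their product admits an expansion to order $\min(p,q)$ whose coefficients are the discrete convolutions of the two coefficient sequences.

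For part $\mathbf{(i)}$ I would apply this rule to the product ${_{j}\mathbf{U}}^{(\varepsilon)}(\rho) \mathbf{p}_j^{(\varepsilon)}(\rho)$. By Lemma \ref{lmm:expansionjU} the first factor has an expansion to order $k$ with coefficients ${_{j}\mathbf{U}}[\rho,q]$, and by condition $\mathbf{P_k^*}$ (with $r=0$) the second factor has an expansion to order $k$ with coefficients $\mathbf{p}_j[\rho,0,n-q]$. Collecting the coefficient of $\varepsilon^n$ in the convolution gives $\sum_{q=0}^n {_{j}\mathbf{U}}[\rho,q] \mathbf{p}_j[\rho,0,n-q]$, which is the stated formula; the $n=0$ term reduces to $\mathsf{\Phi}_j^{(0)}(\rho)$ since ${_{j}\mathbf{U}}[\rho,0] \mathbf{p}_j[\rho,0,0] = (\mathbf{I}-{_{j}\mathbf{P}}^{(0)}(\rho))^{-1}\mathbf{p}_j^{(0)}(\rho)$.

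For part $\mathbf{(ii)}$ I would proceed by induction on $r$, the base case $r=0$ being part $\mathbf{(i)}$. Fix $r \in \{1,\ldots,k\}$ and assume the claimed expansions hold for $\mathsf{\Phi}_j^{(\varepsilon)}(\rho,r-m)$ to order $k-(r-m)$ for each $m=1,\ldots,r$. The first step is to expand $\boldsymbol{\lambda}_j^{(\varepsilon)}(\rho,r)$ as given by \eqref{eq:systemlambdarmatrix}. Its leading term $\mathbf{p}_j^{(\varepsilon)}(\rho,r)$ expands to order $k-r$ by $\mathbf{P_k^*}$. In each product term ${_{j}\mathbf{P}}^{(\varepsilon)}(\rho,m) \mathsf{\Phi}_j^{(\varepsilon)}(\rho,r-m)$ the first factor expands to order $k-m$ by \eqref{eq:expansionjPr} and the second to order $k-(r-m)$ by the induction hypothesis, so by the product rule the product expands to order $\min(k-m,\,k-r+m)$. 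Since $m \in \{1,\ldots,r\}$, both $k-m \geq k-r$ and $k-r+m \geq k-r$ hold, so every such product, and hence $\boldsymbol{\lambda}_j^{(\varepsilon)}(\rho,r)$, has a valid expansion to order $k-r$ with coefficients $\boldsymbol{\lambda}_j[\rho,r,t]$ given by the stated convolution formula. Applying the product rule once more to ${_{j}\mathbf{U}}^{(\varepsilon)}(\rho) \boldsymbol{\lambda}_j^{(\varepsilon)}(\rho,r)$, where the first factor expands to order $k$ and the second to order $k-r$, gives the expansion of $\mathsf{\Phi}_j^{(\varepsilon)}(\rho,r)$ to order $\min(k,k-r)=k-r$ with coefficients $\sum_{q=0}^n {_{j}\mathbf{U}}[\rho,q] \boldsymbol{\lambda}_j[\rho,r,n-q]$, completing the induction.

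The convolution bookkeeping is routine; the one point needing genuine care, and the main obstacle, is the consistent tracking of truncation orders, namely verifying the inequality $\min(k-m,\,k-r+m) \geq k-r$ so that every product entering $\boldsymbol{\lambda}_j^{(\varepsilon)}(\rho,r)$ is reliable precisely to the order $k-r$ claimed for the $r$-th functional. Because the order of each functional decreases with its derivative index, the induction must carry exactly the order $k-(r-m)$ for the lower-order terms, and it is this matching that makes the whole recursion close.
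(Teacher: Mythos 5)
Your proposal is correct and follows essentially the same route as the paper: solving the systems explicitly as $\mathsf{\Phi}_j^{(\varepsilon)}(\rho,r) = {_{j}\mathbf{U}}^{(\varepsilon)}(\rho)\boldsymbol{\lambda}_j^{(\varepsilon)}(\rho,r)$, invoking Lemma \ref{lmm:expansionjU} and condition $\mathbf{P_k^*}$, and multiplying expansions term by term. The only difference is presentational: you phrase the passage from $r-1$ to $r$ as a formal induction with the explicit order check $\min(k-m,\,k-r+m) \geq k-r$, where the paper carries out the case $r=1$ and then repeats the argument successively for $r=2,\ldots,k$.
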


Before proceeding with the proof of Theorem \ref{thm:expansionphi} we would like to comment on the reason that the theorem is stated in such a way that $\mathsf{\Phi}_j^{(\varepsilon)}(\rho,r)$, for $r=1,\ldots,k$, has an expansion of order $k-r$. The reason is that this is exactly what we need for the main result in Petersson (2016), which, we remind, is a sequel of the present paper. However, it is possible to construct asymptotic expansions of different orders than the ones stated in the theorem. In that case, appropriate changes in the perturbation condition should be made. The same remark applies to Lemma \ref{lmm:expansionvarphi} and Theorem \ref{thm:expansionomega} below.

\begin{proof}
Under conditions $\mathbf{A}$--$\mathbf{C}$, we have, for sufficiently small $\varepsilon$, that the recursive systems of linear equations given by relations \eqref{eq:systemphi0matrix}, \eqref{eq:systemphirmatrix}, and \eqref{eq:systemlambdarmatrix}, all have finite components. Moreover, the inverse matrix ${_{j}\mathbf{U}}^{(\varepsilon)}(\rho) = ( \mathbf{I} - {_{j}\mathbf{P}}^{(\varepsilon)}(\rho) )^{-1}$ exists, so these systems have unique solutions.

It follows from \eqref{eq:systemphi0matrix}, Lemma \ref{lmm:expansionjU}, and condition $\mathbf{P_k^*}$ that
\begin{equation} \label{eq:lmmexpansionphi1}
\begin{split}
 \mathsf{\Phi}_j^{(\varepsilon)}(\rho) &= {_{j}\mathbf{U}}^{(\varepsilon)}(\rho) \mathbf{p}_j^{(\varepsilon)}(\rho) \\
 &= ( {_{j}\mathbf{U}}[\rho,0] + {_{j}\mathbf{U}}[\rho,1] \varepsilon + \cdots + {_{j}\mathbf{U}}[\rho,k] \varepsilon^k + \mathbf{o}(\varepsilon^k) ) \\
 &\times ( \mathbf{p}_j[\rho,0,0] + \mathbf{p}_j[\rho,0,1] \varepsilon + \cdots + \mathbf{p}_j[\rho,0,k] \varepsilon^k + \mathbf{o}(\varepsilon^k) ).
\end{split}
\end{equation}

By expanding the right hand side of equation \eqref{eq:lmmexpansionphi1}, we see that part $\mathbf{(i)}$ of Theorem \ref{thm:expansionphi} holds.

With $r=1$, relation \eqref{eq:systemlambdarmatrix} takes the form
\begin{equation} \label{eq:lmmexpansionphi2}
 \boldsymbol{\lambda}_j^{(\varepsilon)}(\rho,1) = \mathbf{p}_j^{(\varepsilon)}(\rho,1) + {_{j}\mathbf{P}}^{(\varepsilon)}(\rho,1) \mathsf{\Phi}_j^{(\varepsilon)}(\rho).
\end{equation}

From \eqref{eq:lmmexpansionphi2}, condition $\mathbf{P_k^*}$, and part $\mathbf{(i)}$, we get
\begin{equation} \label{eq:lmmexpansionphi3}
\begin{split}
 \boldsymbol{\lambda}_j^{(\varepsilon)}(\rho,1) &= \mathbf{p}_j[\rho,1,0] + \cdots + \mathbf{p}_j[\rho,1,k-1] \varepsilon^{k-1} + \mathbf{o}(\varepsilon^{k-1}) \\
 &+ ( {_{j}\mathbf{P}}[\rho,1,0] + \cdots + {_{j}\mathbf{P}}[\rho,1,k-1] \varepsilon^{k-1} + \mathbf{o}(\varepsilon^{k-1}) ) \\
 &\times ( \mathsf{\Phi}_j[\rho,0,0] + \cdots + \mathsf{\Phi}_j[\rho,0,k-1] \varepsilon^{k-1} + \mathbf{o}(\varepsilon^{k-1}) ).
\end{split}
\end{equation}

Expanding the right hand side of \eqref{eq:lmmexpansionphi3} gives
\begin{equation} \label{eq:lmmexpansionphi4}
 \boldsymbol{\lambda}_j^{(\varepsilon)}(\rho,1) = \boldsymbol{\lambda}_j[\rho,1,0] + \boldsymbol{\lambda}_j[\rho,1,1] \varepsilon + \cdots + \boldsymbol{\lambda}_j[\rho,1,k-1] \varepsilon^{k-1} + \mathbf{o}(\varepsilon^{k-1}),
\end{equation}
where
\begin{equation*}
 \boldsymbol{\lambda}_j[\rho,1,t] = \mathbf{p}_j[\rho,1,t] + \sum_{q=0}^t {_{j}\mathbf{P}}[\rho,1,q] \mathsf{\Phi}_j[\rho,0,t-q], \ t=0,\ldots,k-1.
\end{equation*}

It now follows from \eqref{eq:systemphirmatrix}, \eqref{eq:lmmexpansionphi4}, and Lemma \ref{lmm:expansionjU} that
\begin{equation} \label{eq:lmmexpansionphi5}
\begin{split}
 \mathsf{\Phi}_j^{(\varepsilon)}(\rho,1) &= {_{j}\mathbf{U}}^{(\varepsilon)}(\rho) \boldsymbol{\lambda}_j^{(\varepsilon)}(\rho,1) \\
 &= ( {_{j}\mathbf{U}}[\rho,0] + \cdots + {_{j}\mathbf{U}}[\rho,k-1] \varepsilon^{k-1} + \mathbf{o}(\varepsilon^{k-1}) ) \\
 &\times ( \boldsymbol{\lambda}_j[\rho,1,0] + \cdots + \boldsymbol{\lambda}_j[\rho,1,k-1] \varepsilon^{k-1} + \mathbf{o}(\varepsilon^{k-1}) ).
\end{split}
\end{equation}

By expanding the right hand side of equation \eqref{eq:lmmexpansionphi5} we get the expansion in part $\mathbf{(ii)}$ for $r=1$. If $k=1$, this concludes the proof. If $k \geq 2$, we can repeat the steps above, successively, for $r=2,\ldots,k$. This gives the expansions and formulas given in part $\mathbf{(ii)}$.
\end{proof}

Let us now define the following mixed power exponential moment functionals, for $i,j,s \in X$,
\begin{equation*}
 \omega_{i j s}^{(\varepsilon)}(\rho,r) = \sum_{n=0}^\infty n^r e^{\rho n} \mathsf{P}_i \{ \xi^{(\varepsilon)}(n) = s, \ \mu_0^{(\varepsilon)} \wedge \mu_j^{(\varepsilon)} > n \}, \ \rho \in \mathbb{R}, \ r=0,1,\ldots
\end{equation*}
Notice that $\omega_{i j s}^{(\varepsilon)}(\rho,0) = \omega_{i j s}^{(\varepsilon)}(\rho)$.

It follows from conditions $\mathbf{A}$--$\mathbf{C}$ and Lemma \ref{lmm:delta} that for $\rho < \delta$ and sufficiently small $\varepsilon$, the functions $\omega_{i j s}^{(\varepsilon)}(\rho)$ and $p_{i j}^{(\varepsilon)}(\rho)$ are arbitrarily many times differentiable with respect to $\rho$, and the derivatives of order $r$ are given by $\omega_{i j s}^{(\varepsilon)}(\rho,r)$ and $p_{i j}^{(\varepsilon)}(\rho,r)$, respectively. Under these conditions we also have that the functions $\varphi_i^{(\varepsilon)}(\rho)$, defined by equation \eqref{eq:defvarphi}, are differentiable. Let us denote the corresponding derivatives by $\varphi_i^{(\varepsilon)}(\rho,r)$.

Recall from Section \ref{sec:systems} that the functions $\omega_{i j s}^{(\varepsilon)}(\rho)$ satisfy the following system of linear equations,
\begin{equation} \label{eq:systemomega0}
 \omega_{i j s}^{(\varepsilon)}(\rho) = \delta(i,s) \varphi_i^{(\varepsilon)}(\rho) + \sum_{l \neq 0,j} p_{i l}^{(\varepsilon)}(\rho) \omega_{l j s}^{(\varepsilon)}(\rho), \ i,j,s \neq 0.
\end{equation}

Differentiating relation \eqref{eq:systemomega0} gives
\begin{equation} \label{eq:systemomegar}
 \omega_{i j s}^{(\varepsilon)}(\rho,r) = \theta_{i j s}^{(\varepsilon)}(\rho,r) + \sum_{l \neq 0,j}  p_{i l}^{(\varepsilon)}(\rho) \omega_{l j s}^{(\varepsilon)}(\rho,r), \ r=1,2,\ldots, \ i,j,s \neq 0,
\end{equation}
where
\begin{equation} \label{eq:thetar}
 \theta_{i j s}^{(\varepsilon)}(\rho,r) = \delta(i,s) \varphi_i^{(\varepsilon)}(\rho,r) + \sum_{m=1}^r \binom{r}{m} \sum_{l \neq 0,j} p_{i l}^{(\varepsilon)}(\rho,m) \omega_{l j s}^{(\varepsilon)}(\rho,r-m).
\end{equation}

In order to rewrite these systems in matrix form, we define the following column vectors,
\begin{equation} \label{eq:vectoromegar}
 \boldsymbol{\omega}_{j s}^{(\varepsilon)}(\rho,r) =
 \begin{bmatrix}
  \omega_{1 j s}^{(\varepsilon)}(\rho,r) & \cdots & \omega_{N j s}^{(\varepsilon)}(\rho,r)
 \end{bmatrix}
 ^T, \ j,s \neq 0,
\end{equation}
\begin{equation} \label{eq:vectorthetar}
 \boldsymbol{\theta}_{j s}^{(\varepsilon)}(\rho,r) =
 \begin{bmatrix}
  \theta_{1 j s}^{(\varepsilon)}(\rho,r) & \cdots & \theta_{N j s}^{(\varepsilon)}(\rho,r)
 \end{bmatrix}
 ^T, \ j,s \neq 0,
\end{equation}
\begin{equation} \label{eq:vectorvarphir}
 \widehat{\boldsymbol{\varphi}}_s^{(\varepsilon)}(\rho,r) =
 \begin{bmatrix}
  \delta(1,s) \varphi_1^{(\varepsilon)}(\rho,r) & \cdots & \delta(N,s) \varphi_N^{(\varepsilon)}(\rho,r)
 \end{bmatrix}
 ^T, \ s \neq 0.
\end{equation}

Using \eqref{eq:matrixjPr} and \eqref{eq:systemomega0}--\eqref{eq:vectorvarphir}, we can for each $j,s \neq 0$ write the following recursive systems of linear equations,
\begin{equation} \label{eq:systemomega0matrix}
 \boldsymbol{\omega}_{j s}^{(\varepsilon)}(\rho) = \widehat{\boldsymbol{\varphi}}_s^{(\varepsilon)}(\rho) + {_{j}\mathbf{P}}^{(\varepsilon)}(\rho) \boldsymbol{\omega}_{j s}^{(\varepsilon)}(\rho),
\end{equation}
and, for $r=1,2,\ldots,$
\begin{equation} \label{eq:systemomegarmatrix}
 \boldsymbol{\omega}_{j s}^{(\varepsilon)}(\rho,r) = \boldsymbol{\theta}_{j s}^{(\varepsilon)}(\rho,r) + {_{j}\mathbf{P}}^{(\varepsilon)}(\rho) \boldsymbol{\omega}_{j s}^{(\varepsilon)}(\rho,r),
\end{equation}
where
\begin{equation} \label{eq:systemthetarmatrix}
 \boldsymbol{\theta}_{j s}^{(\varepsilon)}(\rho,r) = \widehat{\boldsymbol{\varphi}}_s^{(\varepsilon)}(\rho,r) + \sum_{m=1}^r \binom{r}{m} {_{j}\mathbf{P}}^{(\varepsilon)}(\rho,m) \boldsymbol{\omega}_{j s}^{(\varepsilon)}(\rho,r-m).
\end{equation}

In order to construct asymptotic expansions for the vectors $\boldsymbol{\omega}_{j s}^{(\varepsilon)}(\rho,r)$, we can use the same technique as in Theorem \ref{thm:expansionphi}. However, a preliminary step needed in this case is to construct asymptotic expansions for the functions $\varphi_i^{(\varepsilon)}(\rho,r)$. In order to do this, we first derive an expression for these functions.

Let us define
\begin{equation} \label{eq:defpsir2}
 \psi_i^{(\varepsilon)}(\rho,r) = \sum_{n=0}^\infty n^r e^{\rho n} \mathsf{P}_i \{ \kappa_1^{(\varepsilon)} = n \}, \ \rho \in \mathbb{R}, \ r=0,1,\ldots, \ i \in X.
\end{equation}

Note that
\begin{equation} \label{eq:psirrel}
 \psi_i^{(\varepsilon)}(\rho,r) = \sum_{j \in X} p_{i j}^{(\varepsilon)}(\rho,r), \ \rho \in \mathbb{R}, \ r=0,1,\ldots, \ i \in X.
\end{equation}
Thus, the functions $\psi_i^{(\varepsilon)}(\rho,0)$ are arbitrarily many times differentiable with respect to $\rho$ and the corresponding derivatives are given by $\psi_i^{(\varepsilon)}(\rho,r)$.

The function $\varphi_i^{(\varepsilon)}(\rho)$, defined by equation \eqref{eq:defvarphi}, can be written as
\begin{equation} \label{eq:defvarphi2}
 \varphi_i^{(\varepsilon)}(\rho) = \left\{
 \begin{array}{l l}
  \psi_i^{(\varepsilon)}(0,1) & \rho = 0, \\
  ( \psi_i^{(\varepsilon)}(\rho,0) - 1 ) / (e^\rho - 1) & \rho \neq 0.
 \end{array}
 \right.
\end{equation}

From \eqref{eq:defpsir2} and \eqref{eq:defvarphi2} it follows that
\begin{equation} \label{eq:varphirel}
 \psi_i^{(\varepsilon)}(\rho,0) = (e^\rho - 1) \varphi_i^{(\varepsilon)}(\rho) + 1, \ \rho \in \mathbb{R}.
\end{equation}

Differentiating both sides of \eqref{eq:varphirel} gives
\begin{equation} \label{eq:varphirel2}
 \psi_i^{(\varepsilon)}(\rho,r) = (e^\rho - 1) \varphi_i^{(\varepsilon)}(\rho,r) + e^\rho \sum_{m=0}^{r-1} \binom{r}{m} \varphi_i^{(\varepsilon)}(\rho,m), \ r=1,2,\ldots
\end{equation}

If $\rho = 0$, equation \eqref{eq:varphirel2} implies
\begin{equation*}
 \psi_i^{(\varepsilon)}(0,r) = r \varphi_i^{(\varepsilon)}(0,r-1) + \sum_{m=0}^{r-2} \binom{r}{m} \varphi_i^{(\varepsilon)}(0,m), \ r=2,3,\ldots
\end{equation*}
From this it follows that, for $r=1,2,\ldots,$
\begin{equation} \label{eq:varphidiffzero}
 \varphi_i^{(\varepsilon)}(0,r) = \frac{1}{r+1} \left( \psi_i^{(\varepsilon)}(0,r+1) - \sum_{m=0}^{r-1} \binom{r+1}{m} \varphi_i^{(\varepsilon)}(0,m) \right).
\end{equation}

If $\rho \neq 0$, equation \eqref{eq:varphirel2} gives, for $r=1,2,\ldots,$
\begin{equation} \label{eq:varphidiff}
 \varphi_i^{(\varepsilon)}(\rho,r) = \frac{1}{e^\rho - 1} \left( \psi_i^{(\varepsilon)}(\rho,r) - e^\rho \sum_{m=0}^{r-1} \binom{r}{m} \varphi_i^{(\varepsilon)}(\rho,m) \right).
\end{equation}

Using relations \eqref{eq:psirrel}, \eqref{eq:varphidiffzero}, and \eqref{eq:varphidiff}, we can recursively calculate the derivatives of $\varphi_i^{(\varepsilon)}(\rho)$. Furthermore, it follows directly from these formulas that we can construct asymptotic expansions for these derivatives. The formulas are given in the following lemma.

\begin{lemma} \label{lmm:expansionvarphi}
Assume that conditions $\mathbf{A}$--$\mathbf{C}$ hold.
\begin{enumerate}
\item[$\mathbf{(i)}$] If, in addition, condition $\mathbf{P_k^*}$ holds, then for each $i \neq 0$ and $r=0,\ldots,k$ we have the following asymptotic expansion,
\begin{equation*}
 \psi_i^{(\varepsilon)}(\rho,r) = \psi_i[\rho,r,0] + \psi_i[\rho,r,1] \varepsilon + \cdots + \psi_i[\rho,r,k-r] \varepsilon^{k-r} + o(\varepsilon^{k-r}),
\end{equation*}
where
\begin{equation*}
 \psi_i[\rho,r,n] = \sum_{j \in X} p_{i j}[\rho,r,n], \ n=0,\ldots,k-r.
\end{equation*}
\item[$\mathbf{(ii)}$] If, in addition, $\rho = 0$ and condition $\mathbf{P_{k+1}^*}$ holds, then for each $i \neq 0$ and $r=0,\ldots,k$ we have the following asymptotic expansion,
\begin{equation*}
 \varphi_i^{(\varepsilon)}(0,r) = \varphi_i[0,r,0] + \varphi_i[0,r,1] \varepsilon + \cdots + \varphi_i[0,r,k-r] \varepsilon^{k-r} + o(\varepsilon^{k-r}),
\end{equation*}
where, for $n=0,\ldots,k-r$,
\begin{equation*}
 \varphi_i[0,r,n] = \frac{1}{r+1} \left( \psi_i[0,r+1,n] - \sum_{m=0}^{r-1} \binom{r+1}{m} \varphi_i[0,m,n] \right).
\end{equation*}
\item[$\mathbf{(iii)}$] If, in addition, $\rho \neq 0$ and condition $\mathbf{P_k^*}$ holds, then for each $i \neq 0$ and $r=0,\ldots,k$ we have the following asymptotic expansion,
\begin{equation*}
 \varphi_i^{(\varepsilon)}(\rho,r) = \varphi_i[\rho,r,0] + \varphi_i[\rho,r,1] \varepsilon + \cdots + \varphi_i[\rho,r,k-r] \varepsilon^{k-r} + o(\varepsilon^{k-r}),
\end{equation*}
where, for $n=0,\ldots,k-r$,
\begin{equation*}
 \varphi_i[\rho,r,n] = \frac{1}{e^\rho - 1} \left( \psi_i[\rho,r,n] - e^\rho \sum_{m=0}^{r-1} \binom{r}{m} \varphi_i[\rho,m,n] \right).
\end{equation*}
\end{enumerate}
\end{lemma}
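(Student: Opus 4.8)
The plan is to treat the three parts in turn, leaning entirely on the explicit recursive relations \eqref{eq:psirrel}, \eqref{eq:varphidiffzero}, and \eqref{eq:varphidiff} already derived, together with the elementary fact that a finite sum (and a product) of asymptotic power series is again such a series, with coefficients obtained by adding (respectively convolving) the coefficients of the factors. No existence-of-expansion argument is needed here, since that is handed to us by condition $\mathbf{P_k^*}$; the entire content is book-keeping of orders.

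For part $\mathbf{(i)}$, relation \eqref{eq:psirrel} writes $\psi_i^{(\varepsilon)}(\rho,r)$ as the finite sum $\sum_{j \in X} p_{i j}^{(\varepsilon)}(\rho,r)$. Under $\mathbf{P_k^*}$ each summand has an expansion of order $k-r$ with coefficients $p_{i j}[\rho,r,n]$, and since $X$ is finite I would simply add these $N+1$ expansions term by term. The pooled remainder is still $o(\varepsilon^{k-r})$ and the coefficient of $\varepsilon^n$ is $\psi_i[\rho,r,n] = \sum_{j \in X} p_{i j}[\rho,r,n]$, as claimed.

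For part $\mathbf{(iii)}$ (the case $\rho \neq 0$) I would argue by induction on $r$. The base case $r=0$ is read off from \eqref{eq:defvarphi2}: the order-$k$ expansion of $\psi_i^{(\varepsilon)}(\rho,0)$ from part $\mathbf{(i)}$, divided by the nonzero constant $e^\rho-1$ (where $\rho \neq 0$ is essential) and corrected by the constant $1$, gives the order-$k$ expansion of $\varphi_i^{(\varepsilon)}(\rho,0)$. For the inductive step $r \geq 1$ I would substitute into \eqref{eq:varphidiff} the order-$(k-r)$ expansion of $\psi_i^{(\varepsilon)}(\rho,r)$ from part $\mathbf{(i)}$ and, for each $m<r$, the order-$(k-m)$ expansions of $\varphi_i^{(\varepsilon)}(\rho,m)$ furnished by the induction hypothesis. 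Since $k-m \geq k-r$, every term on the right carries at least order $k-r$, so dividing by $e^\rho-1$ and equating coefficients of $\varepsilon^n$ produces exactly the stated recursion for $\varphi_i[\rho,r,n]$.

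Part $\mathbf{(ii)}$ (the case $\rho=0$) is structurally the same induction but built on \eqref{eq:varphidiffzero} rather than \eqref{eq:varphidiff}, with base case $\varphi_i^{(\varepsilon)}(0) = \psi_i^{(\varepsilon)}(0,1)$. The one point demanding care, and the reason the hypothesis must strengthen to $\mathbf{P_{k+1}^*}$, is that at $\rho=0$ the factor $e^\rho-1$ vanishes, so the degree-$r$ relation \eqref{eq:varphirel2} cannot be solved for $\varphi_i^{(\varepsilon)}(0,r)$ directly; one is forced onto \eqref{eq:varphidiffzero}, which expresses $\varphi_i^{(\varepsilon)}(0,r)$ through $\psi_i^{(\varepsilon)}(0,r+1)$. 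Expanding that quantity to order $k-r$ via part $\mathbf{(i)}$ needs the order-$(k+1-(r+1))=(k-r)$ expansion of a moment of power $r+1$, which is precisely what $\mathbf{P_{k+1}^*}$ supplies. Granting this, matching coefficients of $\varepsilon^n$ yields the formula for $\varphi_i[0,r,n]$. I expect no genuine obstacle beyond this order-tracking: the mathematics is the algebra of truncated power series, and the only subtlety is recognizing that the vanishing of $e^\rho-1$ at the origin costs exactly one order of moments, which is what separates the hypotheses of $\mathbf{(ii)}$ and $\mathbf{(iii)}$.
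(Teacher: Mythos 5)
Your proposal is correct and takes essentially the same route as the paper, which states the lemma without a formal proof on the grounds that it "follows directly" from relations \eqref{eq:psirrel}, \eqref{eq:varphidiffzero}, and \eqref{eq:varphidiff}; your induction on $r$ with truncated-power-series bookkeeping, including the observation that the vanishing of $e^\rho-1$ at $\rho=0$ forces the passage to $\psi_i^{(\varepsilon)}(0,r+1)$ and hence the stronger condition $\mathbf{P_{k+1}^*}$, is exactly the detail the paper leaves implicit. One minor point in your favor: your base case for part $\mathbf{(iii)}$ correctly carries the $-1/(e^\rho-1)$ correction coming from \eqref{eq:defvarphi2}, whereas the lemma's displayed recursion, read literally at $r=0$, $n=0$, omits it (the zeroth coefficient there should simply be the limiting value $\varphi_i^{(0)}(\rho)$).
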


Using \eqref{eq:vectorvarphir} and Lemma \ref{lmm:expansionvarphi} we can now construct the following asymptotic expansions, for $r=0,\ldots,k$, and $s \neq 0$,
\begin{equation} \label{eq:expansionvarphihat}
 \widehat{\boldsymbol{\varphi}}_s^{(\varepsilon)}(\rho,r) = \widehat{\boldsymbol{\varphi}}_s[\rho,r,0] + \widehat{\boldsymbol{\varphi}}_s[\rho,r,1] \varepsilon + \cdots + \widehat{\boldsymbol{\varphi}}_s[\rho,r,k-r] \varepsilon^{k-r} + \mathbf{o}(\varepsilon^{k-r}).
\end{equation}

The next lemma gives asymptotic expansions for $\boldsymbol{\omega}_{j s}^{(\varepsilon)}(\rho,r)$.

\begin{theorem} \label{thm:expansionomega}
Assume that conditions $\mathbf{A}$--$\mathbf{C}$ hold. If $\rho = 0$, we also assume that condition $\mathbf{P_{k+1}^*}$ holds. If $\rho \neq 0$, we also assume that condition $\mathbf{P_k^*}$ holds. Then:
\begin{enumerate}
\item[$\mathbf{(i)}$] We have the following asymptotic expansion,
\begin{equation*}
 \boldsymbol{\omega}_{j s}^{(\varepsilon)}(\rho) = \boldsymbol{\omega}_{j s}[\rho,0,0] + \boldsymbol{\omega}_{j s}[\rho,0,1] \varepsilon + \cdots + \boldsymbol{\omega}_{j s}[\rho,0,k] \varepsilon^k + \mathbf{o}(\varepsilon^k),
\end{equation*}
where
\begin{equation*}
 \boldsymbol{\omega}_{j s}[\rho,0,n] = \left\{
 \begin{array}{l l}
  \boldsymbol{\omega}_{j s}^{(0)}(\rho) & n=0, \\
  \sum_{q=0}^n {_{j}\mathbf{U}}[\rho,q] \widehat{\boldsymbol{\varphi}}_s[\rho,0,n-q] & n=1,\ldots,k.
 \end{array}
 \right.
\end{equation*}
\item[$\mathbf{(ii)}$] For $r=1,\ldots,k$, we have the following asymptotic expansions,
\begin{equation*}
 \boldsymbol{\omega}_{j s}^{(\varepsilon)}(\rho,r) = \boldsymbol{\omega}_{j s}[\rho,r,0] + \boldsymbol{\omega}_{j s}[\rho,r,1] \varepsilon + \cdots + \boldsymbol{\omega}_{j s}[\rho,r,k-r] \varepsilon^{k-r} + \mathbf{o}(\varepsilon^{k-r}),
\end{equation*}
where
\begin{equation*}
 \boldsymbol{\omega}_{j s}[\rho,r,n] = \left\{
 \begin{array}{l l}
  \boldsymbol{\omega}_{j s}^{(0)}(\rho,r) & n=0, \\
  \sum_{q=0}^n {_{j}\mathbf{U}}[\rho,q] \boldsymbol{\theta}_{j s}[\rho,r,n-q] & n=1,\ldots,k-r,
 \end{array}
 \right.
\end{equation*}
and, for $t=0,\ldots,k-r$,
\begin{equation*}
 \boldsymbol{\theta}_{j s}[\rho,r,t] = \widehat{\boldsymbol{\varphi}}_s[\rho,r,t] + \sum_{m=1}^r \binom{r}{m} \sum_{q=0}^t {_{j}\mathbf{P}}[\rho,m,q] \boldsymbol{\omega}_{j s}[\rho,r-m,t-q].
\end{equation*}
\end{enumerate}
\end{theorem}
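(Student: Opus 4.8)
The plan is to mirror the proof of Theorem \ref{thm:expansionphi} almost verbatim, with $\widehat{\boldsymbol{\varphi}}_s^{(\varepsilon)}(\rho)$ playing the role of $\mathbf{p}_j^{(\varepsilon)}(\rho)$ and $\boldsymbol{\theta}_{j s}^{(\varepsilon)}(\rho,r)$ playing the role of $\boldsymbol{\lambda}_j^{(\varepsilon)}(\rho,r)$. First I would note that, under conditions $\mathbf{A}$--$\mathbf{C}$, Lemmas \ref{lmm:finite} and \ref{lmm:delta} guarantee that for $\rho < \delta$ and sufficiently small $\varepsilon$ the matrix ${_{j}\mathbf{U}}^{(\varepsilon)}(\rho) = (\mathbf{I} - {_{j}\mathbf{P}}^{(\varepsilon)}(\rho))^{-1}$ exists and all components of the systems \eqref{eq:systemomega0matrix}, \eqref{eq:systemomegarmatrix}, and \eqref{eq:systemthetarmatrix} are finite. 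Consequently these systems have unique solutions; in particular $\boldsymbol{\omega}_{j s}^{(\varepsilon)}(\rho) = {_{j}\mathbf{U}}^{(\varepsilon)}(\rho) \widehat{\boldsymbol{\varphi}}_s^{(\varepsilon)}(\rho)$ by \eqref{eq:lmmdelta10}, while \eqref{eq:systemomegarmatrix} gives $\boldsymbol{\omega}_{j s}^{(\varepsilon)}(\rho,r) = {_{j}\mathbf{U}}^{(\varepsilon)}(\rho) \boldsymbol{\theta}_{j s}^{(\varepsilon)}(\rho,r)$ for $r \geq 1$.

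The genuinely non-routine preliminary step is the expansion \eqref{eq:expansionvarphihat} of $\widehat{\boldsymbol{\varphi}}_s^{(\varepsilon)}(\rho,r)$, which is precisely where the split in the hypotheses originates and which I regard as the main obstacle of the argument. This is the content of Lemma \ref{lmm:expansionvarphi}: its part $\mathbf{(iii)}$ yields, for $\rho \neq 0$, an expansion of $\varphi_i^{(\varepsilon)}(\rho,r)$ of order $k-r$ under $\mathbf{P_k^*}$, whereas for $\rho = 0$ the recursion \eqref{eq:varphidiffzero} expresses $\varphi_i^{(\varepsilon)}(0,r)$ through $\psi_i^{(\varepsilon)}(0,r+1)$, so one additional order in the moments of $\kappa_1^{(\varepsilon)}$ is consumed and $\mathbf{P_{k+1}^*}$ is required to retain the same expansion order $k-r$. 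In either case the upshot is a componentwise expansion of $\widehat{\boldsymbol{\varphi}}_s^{(\varepsilon)}(\rho,r)$ of order $k-r$, matching the order available for $\mathbf{p}_j^{(\varepsilon)}(\rho,r)$ in Theorem \ref{thm:expansionphi}; once this is in hand the remaining work is purely algebraic.

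For part $\mathbf{(i)}$ I would substitute the expansion of ${_{j}\mathbf{U}}^{(\varepsilon)}(\rho)$ from Lemma \ref{lmm:expansionjU} and the $r=0$ case of \eqref{eq:expansionvarphihat} into $\boldsymbol{\omega}_{j s}^{(\varepsilon)}(\rho) = {_{j}\mathbf{U}}^{(\varepsilon)}(\rho) \widehat{\boldsymbol{\varphi}}_s^{(\varepsilon)}(\rho)$, multiply the two truncated power series, and collect the coefficient of each $\varepsilon^n$. This produces the convolution formula $\sum_{q=0}^n {_{j}\mathbf{U}}[\rho,q] \widehat{\boldsymbol{\varphi}}_s[\rho,0,n-q]$ stated in the theorem, the $n=0$ term being $\boldsymbol{\omega}_{j s}^{(0)}(\rho)$ by part $\mathbf{(ii)}$ of Lemma \ref{lmm:delta}.

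For part $\mathbf{(ii)}$ I would proceed by induction on $r$. Assuming the expansions of $\boldsymbol{\omega}_{j s}^{(\varepsilon)}(\rho,r-m)$, $m=1,\ldots,r$, are already available, I would first expand $\boldsymbol{\theta}_{j s}^{(\varepsilon)}(\rho,r)$ by inserting \eqref{eq:expansionvarphihat}, the matrix expansion \eqref{eq:expansionjPr} of ${_{j}\mathbf{P}}^{(\varepsilon)}(\rho,m)$, and the inductive expansions into \eqref{eq:systemthetarmatrix}, then multiply and collect coefficients to obtain the stated formula for $\boldsymbol{\theta}_{j s}[\rho,r,t]$. Substituting this together with the expansion of ${_{j}\mathbf{U}}^{(\varepsilon)}(\rho)$ into $\boldsymbol{\omega}_{j s}^{(\varepsilon)}(\rho,r) = {_{j}\mathbf{U}}^{(\varepsilon)}(\rho) \boldsymbol{\theta}_{j s}^{(\varepsilon)}(\rho,r)$ and again equating coefficients of $\varepsilon^n$ yields the formula for $\boldsymbol{\omega}_{j s}[\rho,r,n]$. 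Since this is a transcription of the scheme in Theorem \ref{thm:expansionphi}, the only point still demanding care is verifying that each product of truncated series retains exactly the remainder $\mathbf{o}(\varepsilon^{k-r})$, which is guaranteed because Lemma \ref{lmm:expansionvarphi} was arranged to deliver $\widehat{\boldsymbol{\varphi}}_s^{(\varepsilon)}(\rho,r)$ to order $k-r$ under the stated hypotheses.
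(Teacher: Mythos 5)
Your proposal is correct and follows essentially the same route as the paper: the paper's own proof likewise observes that under conditions $\mathbf{A}$--$\mathbf{C}$ the systems \eqref{eq:systemomega0matrix}--\eqref{eq:systemthetarmatrix} have finite components and unique solutions via ${_{j}\mathbf{U}}^{(\varepsilon)}(\rho)$, invokes Lemma \ref{lmm:expansionvarphi} to obtain the expansions \eqref{eq:expansionvarphihat} (which is exactly where the $\mathbf{P_{k+1}^*}$ versus $\mathbf{P_k^*}$ split enters, as you note), and then declares the rest analogous to the proof of Theorem \ref{thm:expansionphi}. Your write-up simply makes explicit the substitution, series multiplication, and induction on $r$ that the paper leaves implicit in the word ``analogous.''
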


\begin{proof}
Under conditions $\mathbf{A}$--$\mathbf{C}$, we have, for sufficiently small $\varepsilon$, that the recursive systems of linear equations given by relations \eqref{eq:systemomega0matrix}, \eqref{eq:systemomegarmatrix}, and \eqref{eq:systemthetarmatrix}, all have finite components. Moreover, the inverse matrix ${_{j}\mathbf{U}}^{(\varepsilon)}(\rho) = ( \mathbf{I} - {_{j}\mathbf{P}}^{(\varepsilon)}(\rho) )^{-1}$ exists, so these systems have unique solutions. Since we, by Lemma \ref{lmm:expansionvarphi}, have the expansions given in equation \eqref{eq:expansionvarphihat}, the proof is from this point analogous to the proof of Theorem \ref{thm:expansionphi}.
\end{proof}


\end{document}